\newcommand{\rmnum}[1]{\romannumeral #1}
\newcommand{\Rmnum}[1]{\expandafter\@slowromancap\romannumeral #1@}
\newtheorem{theorem}{Theorem}[section]
\newtheorem{lemma}[theorem]{Lemma}
\newtheorem{definition}[theorem]{Definition}
\newtheorem{corollary}[theorem]{Corollary}
\newtheorem{claim}[theorem]{Claim}
\newtheorem{remark}[theorem]{Remark}
\newcommand{\intt}{{\rm int}}
\newcommand{\ext}{{\rm ext}}
\begin{document}
	
	\title{DP-3-colorability of planar graphs without cycles of length 4, 7 or 9}

	\vspace{3cm}
	\author{Yingli Kang\footnotemark[1]~, Ligang Jin\footnotemark[2]~, Xuding Zhu\footnotemark[2]}
	\footnotetext[1]{Department of Mathematics, Jinhua Polytechnic, Western Haitang  Road 888, 321017 Jinhua, China; Email: ylk8mandy@126.com}	
	\footnotetext[2]{Department of Mathematics,
		Zhejiang Normal University, Yingbin Road 688,
		321004 Jinhua,
		China; 
		Email: ligang.jin@zjnu.cn (Ligang Jin), xdzhu@zjnu.edu.cn (Xuding Zhu)}
	\date{}
	
	\maketitle

\begin{abstract}
This paper proves that every planar graph without cycles of length 4, 7, or 9 is DP-3-colorable.
\end{abstract}

\textbf{Keywords}: $S$-$k$-coloring; DP coloring; planar graphs; short cycles; Discharging

\section{Introduction}
Graphs considered in this paper are finite and simple. 
A graph is \emph{planar} if it is embeddable into the   plane and a planar graph   embedded in the plane is called a \emph{plane graph}.

The problem of  3-coloring  planar graphs  with restriction on the lengths of cycles is a central topic in chromatic graph   theory. The classical  Gr\"{o}tzsch Theorem says that planar graphs without triangles are 3-colorable \cite{Grotzsch1959109}. It was conjectured by Steinberg    that planar graphs without cycles of lengths 4 and 5 are 3-colorable. This conjecture was disproved in      \cite{CA-disprove}. However, it motivated a lot of research in this area.  It is known that planar graphs with no cycles of lengths from 4 to 7 are 3-colorable \cite{BorodinEtc2005303}, and   it remains a challenging  open problem   whether   planar graphs without cycles of lengths from 4 to 6 are 3-colorable.  It is also known that if $4\textless i\textless j\textless 10$ and $\{ i,j\}\notin \{\{ 5,6\}, \{ 7,8\}$, $\{ 5,9\}, \{ 8,9\}\}$ \cite{BorodinEtc2005303,BorodinEtc2009668,Jinetc_2017_458,Jin_2016_469,LWWBMR-2009,WangChen20071552,Xu2009347}, then planar graphs without cycles of length from $\{4,i,j\}$ are 3-colorable.

The concept  of DP-coloring (also known as correspondence coloring)  of a graph was introduced by Dvo\v{r}\'{a}k and Postle \cite{Dvorak-Postle-2018}, who used it as a tool to prove that every planar graph without cycles of length from 4 to 8 is   3-choosable. 
Currently, as an independent coloring parameter, DP-coloring of graphs has attracted a lot of attention. 


  Dvo\v{r}\'{a}k and Postle \cite{Dvorak-Postle-2018} noted that Thomassen's proof \cite{Thomassen-1995} for choosability can be used to show that every planar graph without cycles of length from $\{3,4\}$ is DP-3-colorable.
Denote by $d^{\triangle}$ the smallest distance between triangles.
The DP-3-colorability was confirmed for planar graphs with $d^{\triangle}\geq 3$ and without cycles of length from $\{4,5\}$ \cite{Yin-Yu-2019}, planar graphs with $d^{\triangle}\geq 2$ and without cycles of length from $\{4,5,6\}$ \cite{Yin-Yu-2019}, $\{4,5,7\}$ \cite{Rao-Wang}, $\{5,6,7\}$ \cite{Liu-Loeb-Yin-Yu-2019}, or $\{5,6,8\}$ \cite{Rao-Wang}, planar graphs with neither intersecting triangles nor cycles of length from $\{4,5,6,7\}$ \cite{Lv-2022}, planar graphs with neither adjacent triangles nor cycles of length from $\{5,6,9\}$ \cite{Rao-Wang}, and planar graphs without cycles of length from $\{3,5,6\}$ \cite{Liu-Loeb-Yin-Yu-2019}, $\{3,6,7,8\}$ \cite{Liu-Loeb-Yin-Yu-2019}, $\{4,5,6,9\}$ \cite{Liu-Loeb-Yin-Yu-2019}, $\{4,5,7,9\}$ \cite{Liu-Loeb-Yin-Yu-2019}, $\{4,6,7,9\}$ \cite{Liu-Loeb-Rolek-Yin-Yu-2019}, $\{4,6,8,9\}$ \cite{Liu-Loeb-Rolek-Yin-Yu-2019}, or $\{4,7,8,9\}$ \cite{Liu-Loeb-Rolek-Yin-Yu-2019}.    

In this paper, we focus on classes of planar graphs forbidden cycle lengths of type $\{4,i,j\}$ with $5 \le i < j$. Before this paper, no such family of graphs is known to be DP-3-colorable. This paper proves the following result:
 
\begin{theorem} \label{thm479}
	Every planar graph without cycles of length 4, 7 or 9 is DP-3-colorable.
\end{theorem}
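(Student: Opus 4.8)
The plan is to argue by contradiction using a vertex-minimal counterexample together with the discharging method. Suppose the theorem fails, and let $(G,H)$ be a counterexample minimizing $|V(G)|$, where $H=(L,(M_{uv}))$ is a cover of $G$ with $|L(v)|\geq 3$ for every $v$ admitting no $H$-coloring. I would first reduce to the case that $G$ is $2$-connected: a vertex of degree at most two is immediately reducible, since after $H$-coloring $G-v$ the two matchings at $v$ forbid at most two colors and leave one; and a cut vertex can be handled block by block, composing an $H$-coloring of one block with a recoloring permutation of the cover at the cut vertex. Hence $\delta(G)\geq 3$ and every face of the plane graph $G$ is bounded by a cycle, so every face length lies in $\{3,5,6,8,10,11,\ldots\}$.

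Next I would establish the structural lemmas that control how triangles and $3$-vertices cluster. Since $G$ has no $4$-cycle, no two $3$-faces share an edge; more importantly, the forbidden $7$- and $9$-cycles prevent small faces from surrounding a triangle. For instance, two $5$-faces incident to a common $3$-face $xyz$ close up, through their two boundaries and one triangle edge, into a $9$-cycle, so \emph{each $3$-face is incident to at most one $5$-face}, whence at least two of its three neighboring faces have length $\geq 6$. A family of such adjacency restrictions is obtained by listing, for each pattern of small faces around a triangle or along a short path between two triangles, the closed walk it produces and checking its length against $\{4,7,9\}$. In parallel, the reducibility arguments proper delete a configuration $S$, $H$-color $G-S$ by minimality, and extend: each vertex of $S$ keeps at least $3$ minus the number of its already-colored neighbors, so any $S$ admitting an ordering in which every vertex has at most two neighbors that are colored before it (counting all neighbors outside $S$) is reducible. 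The DP setting forces the pessimistic assumption that distinct colored neighbors forbid distinct colors, so these counts cannot be improved the way they sometimes can for ordinary list coloring.

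The discharging phase assigns initial charge $\mu(v)=d(v)-4$ to each vertex $v$ and $\mu(f)=\ell(f)-4$ to each face $f$; by Euler's formula the total charge equals $-8$. Only $3$-vertices and $3$-faces carry negative charge (each $-1$), while faces of length $\geq 5$ and vertices of degree $\geq 5$ are the sources of positive charge. I would then design rules sending charge from long faces and high-degree vertices to their incident $3$-faces and $3$-vertices — for example, faces of length $\geq 6$ donating a fixed amount across each edge shared with a triangle, and vertices distributing their surplus to incident triangular faces — calibrated using the adjacency restrictions above, in particular the fact that every triangle touches at least two faces of length $\geq 6$. The aim is to prove that every vertex and every face ends with nonnegative charge, contradicting the total of $-8$.

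The main obstacle will be the bookkeeping around $5$-faces, which is exactly what separates this $\{4,7,9\}$ problem from the already known $\{4,5,7,9\}$ case: $5$-faces are abundant, carry only $+1$ of charge, and are only loosely constrained by the forbidden $7$- and $9$-cycles, so they can be neither reliable donors nor safely ignored. The crux is to prove enough reducible configurations to guarantee that each negatively charged object — a $3$-face, or a $3$-vertex lying on a triangle — is adjacent to sufficiently many genuine donors of length $\geq 6$, and then to verify nonnegativity through a finite but delicate case analysis of the possible face-patterns around triangles and around $3$-vertices. Matching the two sides of the argument, namely reductions strong enough to let comparatively weak discharging rules succeed, is the heart of the proof.
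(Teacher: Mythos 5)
Your overall strategy (minimal counterexample, structural lemmas extracted from the forbidden cycle lengths, discharging with charges $d(v)-4$ and $d(f)-4$) is the right family of techniques, and your observation that a $3$-face meets at most one $5$-face is indeed one of the structural facts the paper uses. But there are two genuine gaps that the plan, as written, cannot get around. First, your reducibility criterion --- delete a set $S$ and extend provided $S$ admits an ordering in which each vertex has at most two previously colored neighbors --- is too weak for this problem. A triangle all of whose vertices are internal $3$-vertices (a \emph{light} $3$-face), or a light $5$-face, admits no such ordering: the last vertex always has three colored neighbors. These configurations cannot be excluded by your criterion, and they are exactly where the charge deficit concentrates. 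The paper handles them by exploiting the DP structure itself: a light face whose boundary cycle is \emph{negative} (the composition of the edge permutations is not conjugate to the identity) is reducible, while a \emph{positive} light $3$-face is genuinely irreducible and must be carried through the discharging as a ``bad'' face fed by the outer neighbors of its vertices. Your plan has no mechanism for either half of this dichotomy; your remark that the DP setting ``forces the pessimistic assumption'' concedes precisely the case you cannot afford to concede.

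Second, the remaining reductions (for positive light $5$-faces and for $3$-vertex paths attached to bad triangles) are not deletions at all but deletion-plus-identification operations: one removes the configuration and identifies two of its outside neighbors, forcing them to receive the same color, which is what rescues the otherwise uncolorable last vertex. To invoke minimality one must check that the identification creates no $4$-, $7$- or $9$-cycle and leaves the class $\mathcal{G}$; that check rests on an analysis of short separating cycles (the claw/biclaw classification of bad $12$- and $13$-cycles), which in turn requires the theorem to be proved in the stronger relativized form: every $3$-coloring of a precolored \emph{good} boundary cycle of length at most $13$ extends to the whole graph. The extension form also supplies the positive charge reservoir (the outer face carries $d(f_0)+4$ and pays $\tfrac{17}{13}$ to each incident vertex) that closes the final accounting. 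A direct minimal counterexample to the unrelativized statement gives you neither the separating-cycle lemma nor the precolored boundary, so both the identification arguments and the charge count fail to go through; you would need to recast the entire proof as such an extension theorem before the discharging phase can be completed.
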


It is well-known that DP-$k$-colorable graphs are signed  $k$-choosable \cite{JWZ_2021,KS_2015,Mac_Ras_Sko_2015}. Hence, we have the following corollary:

\begin{corollary} \label{coro479-list}
	Every planar graph without cycles of length 4, 7 or 9 is 3-choosable  (and hence $3$-colorable).
\end{corollary}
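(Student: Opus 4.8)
The plan is to argue by contradiction via a minimal counterexample together with a discharging argument, the standard paradigm for results of this type. Suppose $G$ is a planar graph without cycles of length $4$, $7$, or $9$ that admits a cover (correspondence assignment) $\mathcal{H}$ with no $\mathcal{H}$-coloring, chosen so that $|V(G)|+|E(G)|$ is as small as possible. First I would record the basic DP-reducibility facts: a vertex of degree at most $2$ can always be colored last, so $\delta(G)\ge 3$; and by a similar last-vertex list-size counting argument $G$ may be assumed $2$-connected, so that every face of a fixed plane embedding is bounded by a cycle and hence has length in $\{3,5,6,8,10,11,\dots\}$. In particular there are no $4$-faces, $7$-faces, or $9$-faces.

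Next I would develop the list of reducible configurations. The ban on $4$-cycles already forces triangles to be pairwise edge-disjoint (two triangles sharing an edge create a $4$-cycle), so every edge of a $3$-face is shared with a face of length at least $5$. The real work is to show, using minimality of $\mathcal{H}$, that $G$ cannot contain various local configurations built from $3$-vertices sitting on triangles and short faces. The template for each such proof is: delete a small set $S$ of vertices, apply minimality to obtain an $\mathcal{H}$-coloring of $G-S$, and then extend it across $S$, the extension succeeding because each vertex of $S$ ends up with strictly more available colors (in its size-$3$ list) than uncolored neighbors. Unlike ordinary list coloring, I must track the matchings of the correspondence rather than merely count colors, since two neighbors may forbid the same or different colors depending on $\mathcal{H}$; the forbidden cycle lengths are what guarantee the relevant vertices are far enough apart for the bookkeeping to close.

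For the discharging phase I would assign initial charges $\mu(v)=2d(v)-6$ to each vertex and $\mu(f)=d(f)-6$ to each face, so that by Euler's formula $\sum_{v}\mu(v)+\sum_{f}\mu(f)=-12$. With $\delta(G)\ge 3$ every vertex has nonnegative charge, and the only faces with negative charge are the $3$-faces (charge $-3$) and the $5$-faces (charge $-1$); the only positive donors are vertices of degree at least $4$ and faces of length at least $8$, while $3$-vertices and $6$-faces are neutral. I would then design discharging rules that send charge from big vertices and big ($\ge 8$) faces to the incident and nearby $3$- and $5$-faces, and verify that the absence of the reducible configurations forces every element to finish with nonnegative charge, contradicting the total $-12$.

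The main obstacle I anticipate is precisely this last balancing act, and it is what makes $\{4,7,9\}$ harder than the four-length families already in the literature. Because only three lengths are forbidden, the allowed face lengths $3,5,6,8$ together with all lengths $\ge 10$ leave very little positive charge available: $5$- and $6$-faces cannot help (one is itself a receiver, the other is neutral), so every deficient $3$- or $5$-face must be fed entirely by degree-$\ge 4$ vertices and $\ge 8$-faces, which may be several steps away. The delicate point is to rule out, as reducible configurations, clusters of triangles and $5$-faces sharing $3$-vertices that would otherwise starve for charge; making the reducible set strong enough for discharging to succeed while keeping each reducibility proof valid in the DP (correspondence) setting is where the bulk of the effort will go.
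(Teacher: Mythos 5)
The statement you were asked to prove is the corollary, and in the paper it has a one-line proof: list coloring is a special case of DP-coloring (a list assignment corresponds to the correspondence in which each edge's matching pairs equal colors), so DP-$3$-colorable implies $3$-choosable, and Theorem \ref{thm479} does the rest. You have overlooked this reduction entirely and instead set out to re-prove the paper's main theorem from scratch in the DP setting. That is a legitimate alternative route in principle, but it means you have taken on the full difficulty of the paper rather than the corollary.

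Judged as such, what you have written is a plan, not a proof, and the gap is exactly where you say it is: you never produce the reducible configurations or the discharging rules, and you explicitly defer ``the bulk of the effort.'' The paper's execution requires machinery that your outline does not predict and that your setup cannot support. First, the induction is not on a bare minimal counterexample but on extending a precoloring of a \emph{good} $13^-$-cycle bounding the outer face (Theorem \ref{thm_main_extension}); this stronger hypothesis is what lets one delete interiors of separating good cycles, cut along splitting paths, and identify vertices (Lemmas \ref{lem_separating-cycle}, \ref{lem_splitting path}, \ref{pro_operation}) --- reductions that have no justification in your plain minimal-counterexample framework, because after cutting you would have no way to recombine colorings across the cut. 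Second, the configuration analysis hinges on a classification of ``bad'' cycles by claws and biclaws, on special $3$-face/$5$-face pairs, and on a delicate lemma (Lemma \ref{lem_good_path}) about paths leaving a positive light triangle; and the discharging uses charges $d(x)-4$ with the outer face as a charge source and a separate global accounting over the special subgraphs $\mathcal{H}$, none of which is visible from your $\mu(v)=2d(v)-6$, $\mu(f)=d(f)-6$ sketch. In short: either invoke Theorem \ref{thm479} and the standard DP-implies-choosable fact, which closes the corollary in one sentence, or accept that your outline is missing essentially all of the content of Sections 2 and 3.
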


\section{$S$-colorings, bad cycles, and other notations}

\begin{definition}
	Assume $G$ is a graph and $S$ is a set of permutations of positive integers. An \emph{$S$-labelling} of $G$ is a pair $(D,\sigma)$, where $D$ is an orientation of $G$ and $\sigma \colon\ E(D)\rightarrow S$ is a mapping which assigns to each arc $e$ of $D$ a permutation $\sigma_e$ of $S$. The mapping $\sigma$ is called the \emph{signature} of $G$, and $\sigma_e$ the \emph{sign} of $e$. The pair $(D,\sigma)$ is also called an \emph{$S$-labelled graph}. A \emph{$k$-coloring} of $(D,\sigma)$ is a mapping $f\colon\ V(G)\rightarrow [k]=\{1,2,\cdots,k\}$ such that for each arc $e=(x,y)$ of $D$, $\sigma_e(f(x))\neq f(y)$. A graph $G$ is \emph{$S$-$k$-colorable} if $(D,\sigma)$ is $k$-colorable for every $S$-labelling $(D,\sigma)$ of $G$. The \emph{$S$-chromatic number} of a graph $G$ is the minimum integer $k$ such that $G$ is $S$-$k$-colorable.
\end{definition}

The concept of $S$-$k$-coloring of a graph was introduced by Jin, Wong, and Zhu \cite{JWZ_2021}. It is a common generalization of many coloring concepts including $k$-coloring, signed-$k$-coloring, signed-$Z_k$-coloring, list-$k$-coloring, DP-$k$-coloring, group coloring, and coloring of gain graphs. In particular, if $S$ is   the set of all permutations of integers, then $S$-$k$-colorable is equivalent to 
DP-$k$-colorable. In this paper, we use this alternate definition of DP-$k$-colorable graphs. 

Let $S_k$ denote the set of all the permutations on the set $[k]=\{1,2,,\ldots,k\}$. 
For an  $S$-$k$-coloring of a graph, since the color set is $[k]$, the set $S$ can be restricted to 
permutations on $[k]$. So, a DP-$k$-coloring is equivalent to an $S_k$-$k$-coloring.
Therefore, Theorem \ref{thm479} can be reformulated as follows.
\begin{theorem} \label{thm479-S}
	Every planar graph without cycles of length 4, 7 or 9 is $S_3$-3-colorable.
\end{theorem}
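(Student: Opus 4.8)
The plan is to argue by contradiction via a minimal counterexample combined with a discharging argument, in the spirit of the Thomassen/Borodin-type proofs of $3$-colorability but adapted to the $S_3$-labelled setting. Let $G$, together with a plane embedding and an $S_3$-labelling $(D,\sigma)$ admitting no $3$-coloring, be a counterexample to Theorem~\ref{thm479-S} with $|V(G)|+|E(G)|$ minimum. First I would record the reductions forced by minimality: $G$ is connected, $\delta(G)\ge 3$ (a vertex of degree at most $2$ sees at most two forbidden colors through its incident arcs and can be colored last), and $G$ is $2$-connected, so that every face boundary is a cycle and the forbidden lengths $4,7,9$ translate directly into restrictions on face sizes. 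Since cycles of length $4,7,9$ are absent, admissible short cycles have length in $\{3,5,6,8,10,\dots\}$; in particular a $3$-face cannot be adjacent to another $3$-face (this would create a $4$-cycle) nor to a $6$-face (a $7$-cycle), and a $5$-face cannot be adjacent to a $6$-face (a $9$-cycle). These are precisely the separations the hypothesis is designed to supply.

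The heart of the argument is a list of \emph{reducible configurations}: small plane subgraphs, typically describing $3$-vertices and the pattern of $3$-faces around them, that cannot occur in $G$. For each such configuration I would delete or contract a bounded piece, invoke minimality to $3$-color the remainder for the induced $S_3$-labelling, and then show the coloring extends no matter which permutations $\sigma$ places on the deleted arcs. Here the notion of a \emph{bad cycle} introduced above is the right bookkeeping device: extending a partial coloring across a short face reduces to verifying that the relevant cycle is not bad, and the forbidden lengths guarantee enough slack—an available color at a degree-$3$ vertex, or a nontrivial composition of the signs around a short face—to complete the extension uniformly in $\sigma$.

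Finally I would set up discharging on the plane graph. Assigning each vertex and face the initial charge $d(x)-4$, Euler's formula yields total charge $\sum_x(d(x)-4)=-8<0$, while vertices of degree at least $4$ and faces of length at least $4$ already carry nonnegative charge; only $3$-vertices and $3$-faces are in deficit. I would design rules sending charge from large faces and high-degree vertices to their neighboring $3$-faces and $3$-vertices, and then, using the reducible configurations together with the cycle-length restrictions above, verify that every vertex and every face ends with nonnegative charge, contradicting the negative total.

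The step I expect to be the main obstacle is the reducibility analysis for configurations containing $3$-faces, because in the $S_3$ setting one cannot prescribe a fixed correspondence between colors across an edge: the adversary's permutations must be defeated simultaneously for all choices. The identification/recoloring tricks available for ordinary $3$-coloring are lost, so each extension must be shown to work for every signature, and confirming that the short faces permitted by the hypothesis can never assemble into a bad cycle that blocks the extension—this is exactly where the absence of $4$-, $7$-, and $9$-cycles must be exploited—is where the real work will lie.
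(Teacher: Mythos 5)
Your overall template---minimal counterexample, reducible configurations, discharging on the charge $d(x)-4$---is the right family of techniques, and you correctly identify where the difficulty sits, but there is a genuine gap: you run the induction on the bare colorability statement, whereas the proof has to be run on a strengthened precoloring-extension statement. The paper derives Theorem \ref{thm479-S} from Theorem \ref{thm_main_extension}: if the outer face of $G\in\mathcal{G}$ is bounded by a \emph{good} cycle (a $13^-$-cycle with no claw and no biclaw), then every $3$-coloring of that cycle extends to $(D,\sigma)$. This strengthening is not optional. Your reducible configurations are all of the form ``delete a bounded piece, color the rest by minimality, extend back''; but a $3$-vertex is not reducible by plain deletion (its three precolored neighbors can forbid all three colors), so $3$-vertices must be eliminated through their incident faces, and extending back across a \emph{positive} light triangle is precisely the case where the sign slack you invoke is absent. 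The paper handles it (Lemmas \ref{lem_n3face}, \ref{lem_5face}, \ref{lem_good_path}) by deleting vertices and \emph{identifying} two others or inserting an edge, and certifying that this creates no $9^-$-cycle and does not damage the outer boundary requires the splitting-path lemma (Lemma \ref{lem_splitting path}), the absence of separating good cycles (Lemma \ref{lem_separating-cycle}), and Lemma \ref{pro_operation}---all of which are statements about a precolored outer cycle and only exist inside the extension framework. Incidentally, your remark that the identification tricks of ordinary $3$-coloring are ``lost'' in the $S_3$ setting is too pessimistic: by Remark \ref{rem_switch} one may switch so that all edges at the relevant vertices carry the identity permutation, after which identification works essentially as in the classical case; this normalization is a key enabling step you would need to supply.

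A second, related discrepancy is in the discharging accounting. With your initial charges the total is $-8$ and you must certify nonnegativity everywhere; the paper instead assigns the precolored outer face charge $d(f_0)+4$ (so the total is $0$), has it pay $\frac{17}{13}$ to each incident vertex (rule $R1$), and derives a contradiction from \emph{strict} positivity at external vertices. The surplus of $8$ units at the outer face finances the $2$-vertices, strings, and weak external configurations that genuinely survive the reductions in the extension setting. A closed $-8$ budget is not a priori impossible, but it would require a different and substantially stronger list of unavoidable reducible configurations than the one you gesture at, and you have not supplied it; as written, the sketch does not close.
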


Given an $S_k$-labelled graph $(D,\sigma)$, if $D'$ is obtained from $D$ by reversing an arc $(x,y)$, and $\sigma'$ is obtained from $\sigma$ by 
letting $\sigma'(y,x)= (\sigma(x,y))^{-1}$, then $(D, \sigma)$ is equivalent to $(D',\sigma')$, and $(D,\sigma)$ is $S$-$k$-colorable if and only if $(D',\sigma')$ is $S$-$k$-colorable. Given $(D, \sigma)$, 
\emph{switch} a vertex $u$ by a sign $s\in S_k$ defines another signature $\sigma'$ as follows: 
\begin{align*}
	\sigma'_e=
	\begin{cases}
		s \circ \sigma_e, &\text{~if $e=(u,v)$;}\\
		\sigma_e \circ s^{-1}, &\text{~if $e=(v,u)$;}\\
		\sigma_e, &\text{~otherwise.}
	\end{cases}
\end{align*}
Two $S_k$-labelled graphs are \emph{switch-equivalent} if one can be obtained from the other by a sequence of switches.
Clearly, two switch-equivalent $S_k$-labelled graphs have the same $S_k$-chromatic number.
An edge is $\textit{positive}$ if its sign is $id$ (the identity); \textit{negative} otherwise.
A cycle is \textit{all-positive} if all of its edges are positive. A cycle is \textit{positive} if it is switch-equivalent to an all-positive cycle; \textit{negative} otherwise.

Let $\mathcal{G}$ denote the set of  connected plane graphs without cycles of length 4, 7, or 9. 
For a set of vertices or a set of edges $S$ of a graph $G$, denote by $G[S]$ the subgraph of $G$ induced by $S$.
\begin{definition}
	Let $G$ be a graph in $\mathcal{G}$ and $C$ be a cycle in $G$.
	If a vertex $v\notin V(C)$ has three neighbors $v_1,v_2,v_3$ on $C$, then $G[\{vv_1,vv_2, vv_3\}]$ is called a \emph{claw} of $C$.
	If $u_1,u_2\notin V(C)$ are two adjacent vertices such that for each $i\in\{1,2\}$, $u_i$ has two neighbors $u_i'$ and $u_i''$ on $C$ (it allows that $\{u_1',u_1''\}\cap \{u_2',u_2''\}\neq \emptyset$), then $G[\{u_1u_2,u_1u_1',u_1u_1'',u_2u_2',u_2u_2''\}]$ is called a \emph{biclaw} of $C$ (see Figure \ref{fig_claw}). A \emph{good cycle} is a $13^-$-cycle that has neither claws nor biclaws.
	A \emph{bad cycle} is a $13^-$-cycle that is not good.
	\begin{figure}[h]
		\centering
		\includegraphics[width=5cm]{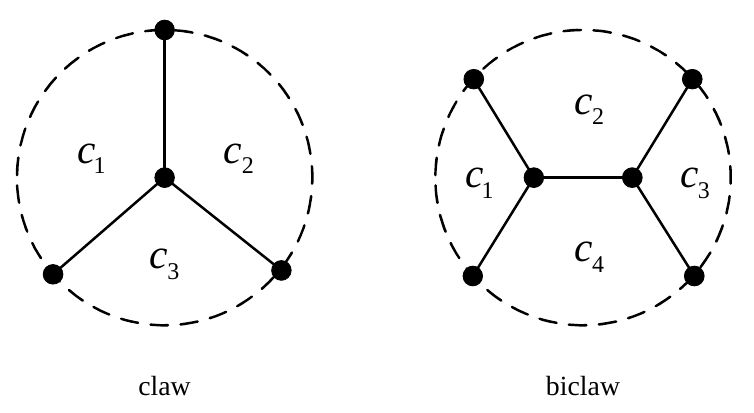}\\
		\caption{A claw or biclaw of a cycle}\label{fig_claw}
	\end{figure}
\end{definition}

As in Figure \ref{fig_claw}, the cycles into which a claw or a biclaw divides $C$ are called \emph{cells}.
Let $c_i$ be the length of a cell.
We further call $C$ a \emph{$(c_1,c_2,c_3)$-claw} or a \emph{$(c_1,c_2,c_3,c_4)$-biclaw}.

By the definition of bad cycles, one can easily conclude the following lemma.
\begin{lemma} \label{bad cycle}
	If $C$ is a bad cycle of a graph of $\cal{G}$, then $|C|\in\{12,13\}$. Furthermore, if $|C|=12$, then $C$ has a (3,5,10)-claw, (5,5,8)-claw, or (6,6,6)-claw; if $|C|=13$, then $C$ has a (3,5,11)-claw or (5,5,5,8)-biclaw, see Figure \ref{fig_bad_cycle}. 
	\begin{figure} [hh]
		\centering
		\includegraphics[width=3in]{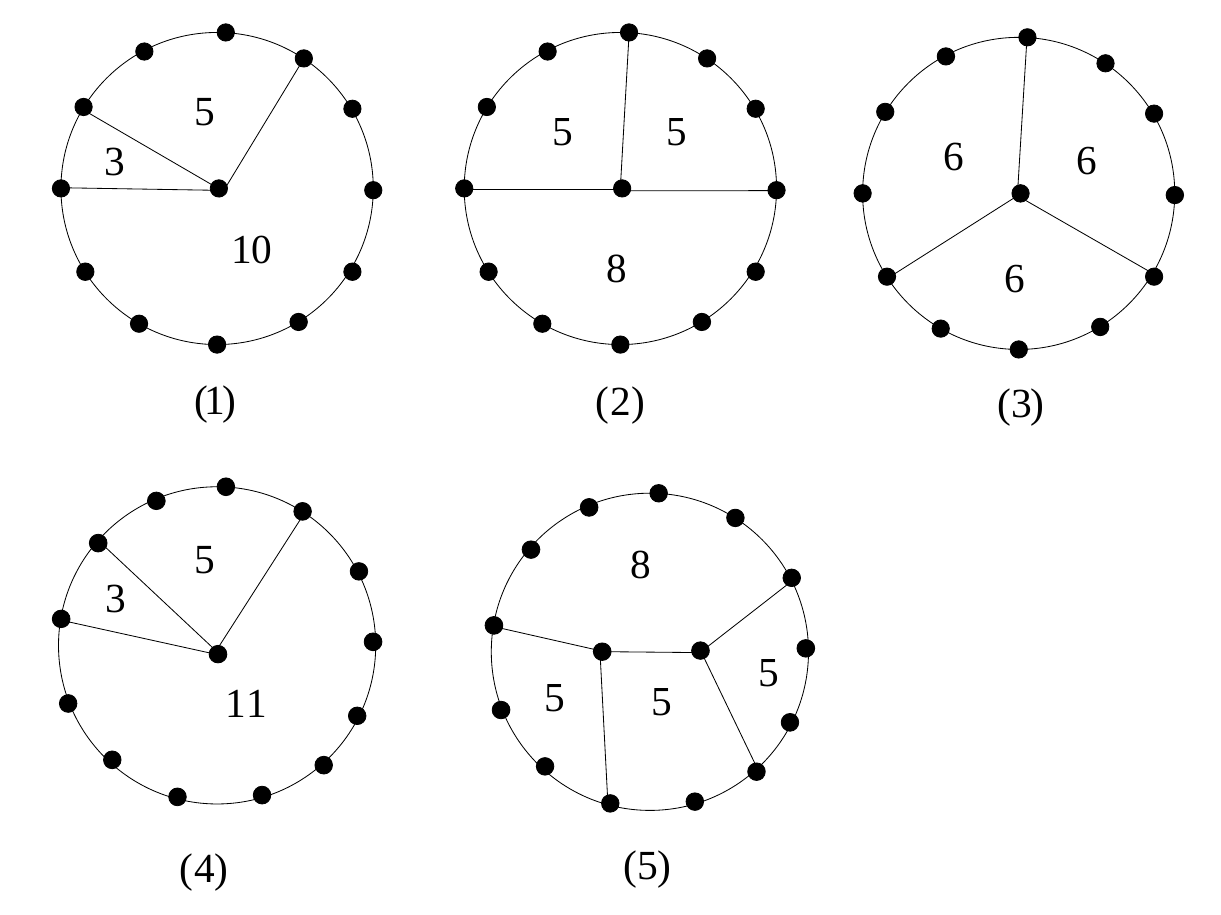}
		\caption{Bad cycles in a graph of $\cal{G}$}\label{fig_bad_cycle}
	\end{figure}
\end{lemma}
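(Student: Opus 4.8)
The plan is to treat this as a finite classification problem driven entirely by the forbidden cycle lengths $4,7,9$ together with the bound $|C|\le 13$. The key point is that a claw or biclaw of $C$, combined with the arcs of $C$, creates not only the cells but several further cycles of the ambient graph $G\in\mathcal{G}$, and every one of these must avoid lengths $4$, $7$, and $9$; this pins the cell lengths down to a very short list.

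I would first dispose of the claw case. Suppose $C$ has a claw at $v$ with neighbors $v_1,v_2,v_3$ on $C$, splitting $C$ into arcs of edge-lengths $a_1,a_2,a_3$ with $a_1+a_2+a_3=|C|$ and each $a_i\ge 1$. The three cells have lengths $c_i=a_i+2$, and for each pair $i\neq j$ the two arcs $a_i,a_j$ together with two claw-edges form a further cycle of length $a_i+a_j+2=c_i+c_j-2$. Since neither these cycles, nor the cells, nor $C$ itself may have length in $\{4,7,9\}$, I obtain $c_i\in\{3,5,6,8,10,11,12,13\}$, the pairwise condition $c_i+c_j\notin\{6,9,11\}$, and $c_1+c_2+c_3=|C|+6\le 19$ with $|C|\notin\{4,7,9\}$. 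A direct enumeration of the unordered triples satisfying these constraints leaves exactly $(3,5,10)$, $(5,5,8)$, $(6,6,6)$ (giving $|C|=12$) and $(3,5,11)$ (giving $|C|=13$), which is precisely the claw part of the statement; in particular every surviving triple already forces $|C|\in\{12,13\}$.

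The biclaw case follows the same philosophy but is more delicate. Here the bridge edge $u_1u_2$ means the two cells passing through it have length $b+3$ (an arc of length $b$, the two attaching edges, and the bridge), while the two cells through a single $u_i$ have length $b'+2$. I would first use planarity to fix the cyclic arrangement of the (up to four) attachment points, then list \emph{all} cycles produced by the configuration---the four cells, the cycle obtained by deleting the bridge (of length $b_2+b_4+4$ in the generic labelling), and the cycles obtained by merging a single cell with an adjacent bridge cell across a shared attaching edge---and impose that none of them has length in $\{4,7,9\}$. Degenerate configurations, in which $\{u_1',u_1''\}\cap\{u_2',u_2''\}\neq\emptyset$, must be handled separately; the extreme case where $u_1$ and $u_2$ attach to the same two vertices is eliminated at once, since it contains a $4$-cycle. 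Running the resulting enumeration under $|C|\le 13$ forces $|C|=13$ and cell lengths $(5,5,5,8)$.

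I expect the biclaw enumeration to be the main obstacle. Unlike the claw, there is no single clean parametrization, one must correctly account for the shared-vertex degeneracies, and---most importantly---one must make sure that \emph{every} cycle created by the configuration, not merely the four cells, is included among the length constraints; missing even one merged cycle could let a spurious configuration survive the check. Once the complete list of these cycles is written down, however, the remaining step is a routine finite arithmetic verification, exactly as in the claw case, and it yields the stated classification together with $|C|\in\{12,13\}$.
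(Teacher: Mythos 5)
Your overall strategy---enumerate the cell-length vectors subject to the requirement that \emph{every} cycle created by the claw or biclaw configuration avoids lengths $4$, $7$, $9$---is the natural one (the paper offers no proof, asserting the lemma as immediate from the definitions), and your claw analysis is complete and correct: for a claw you account for all cycles through the claw vertex, namely the three cells $a_i+2$ and the three complementary cycles $a_j+a_k+2$, and the arithmetic does leave exactly $(3,5,10)$, $(5,5,8)$, $(6,6,6)$ and $(3,5,11)$.

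The gap is in the biclaw case, and it is precisely the failure mode you yourself warn against: the list of cycles you write down is incomplete. With arcs $b_1,b_2,b_3,b_4$ between consecutive attachment points (so cells of lengths $b_1+2$, $b_2+3$, $b_3+2$, $b_4+3$), you impose constraints coming from the four cells, the bridge-deleted cycle $b_2+b_4+4$, and the four merged cycles $b_i+b_{i+1}+3$. You omit the four \emph{cell-complement} cycles obtained by pairing each path through $\{u_1,u_2\}$ with the \emph{long} arc of $C$; these have lengths $b_2+b_3+b_4+2$, $b_1+b_2+b_4+2$, $b_1+b_3+b_4+3$ and $b_1+b_2+b_3+3$, and they are exactly the biclaw analogues of the cycles $a_j+a_k+2$ that you did include in the claw case. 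Without them the enumeration does not close. For instance, $(b_1,b_2,b_3,b_4)=(1,2,1,2)$ describes a $6$-cycle $C$ carrying a non-degenerate biclaw with cells $(3,5,3,5)$ in which every cycle on your list has length in $\{3,5,6,8\}$, yet the configuration contains the $7$-cycle formed by the path $u_1'u_1u_1''$ together with the arc of length $b_2+b_3+b_4=5$; likewise the degenerate quadruple $(3,0,3,2)$ on an $8$-cycle passes all of your stated constraints but forces $7$- and $9$-cycles. Both would be counterexamples to the claim $|C|\in\{12,13\}$ if they were admissible, so the complement cycles are genuinely needed, not just a safety margin. Once those four constraints are added, the enumeration does terminate with only $(b_1,b_2,b_3,b_4)=(3,2,3,5)$, i.e.\ the $(5,5,5,8)$-biclaw on a $13$-cycle; the fix is local, but as written your biclaw check would certify false configurations.
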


The \textit{length} of a path $P$ or a cycle $C$, denoted by $|P|$ or $|C|$, is the number of edges of $P$ or $C$, respectively. Denote by $d(f)$ the size of a face $f$ of a plane graph.  
Let $k$ be a positive integer. A \textit{$k$-vertex} (resp., $k^+$-vertex and $k^-$-vertex) is a vertex $v$ with $d(v)=k$ (resp., $d(v)\geq k$ and $d(v)\leq k$). 
Similar notations are applied for paths, cycles, and faces, where $d(v)$ is replaced by $|P|,|C|$, and $d(f)$, respectively.
A path $P$ and a vertex $v\notin V(P)$ are \emph{adjacent} if $v$ is adjacent to an end-vertex of $P$.
A \emph{$k$-string} is a path on $k$ 2-vertices that is adjacent to no 2-vertices.
A $k$-cycle on vertices $v_1,\ldots,v_k$ in clockwise cyclic order is denoted by $[v_1\ldots v_k]$.
If $u$ is a 3-vertex of a triangle $T$, then the neighbor of $u$ not on $T$ is called the \emph{outer neighbor} of $u$ (also, of $T$). 
A \emph{splitting path} of a cycle $C$ is a path $P=u_1u_2\ldots u_k$ with $V(C)\cap V(P)=\{u_1,u_k\}$.

Consider a plane graph $G$.
A vertex is \textit{external} if it lies on the boundary of the unbounded face; \textit{internal} otherwise.  
For a cycle $C$, let $\intt(C)$ and $\ext(C)$ denote the set of vertices in the interior and exterior of  $C$, respectively. 
A cycle $C$ is \textit{separating} if both $\intt(C)$ and $\ext(C)$ are nonempty.
Denote by $\intt[C]$ (resp., $\ext[C]$) the subgraph of $G$ consisting of $C$ and its interior (resp., $C$ and its exterior).

Given $G\in \mathcal{G}$,
if $H$ is the union of a 5-face $[v_2v_3\ldots v_6]$ and a 3-face $[v_2v_1v_6]$ which are adjacent, then $H$ is called a \textit{special subgraph} of $G$, and both faces of $H$ are called $\textit{special faces}$.
Since $G\in \mathcal{G}$, any two special subgraphs of $G$ are edge-disjoint, but they may share vertices and if so, it is easy to verify that they share precisely one vertex.
Denote by $\mathcal{H}(G)$ the union of all the special subgraphs of $G$.
A face is \textit{light} if its incident vertices are all internal 3-vertices.
A \textit{bad 3-face} is a positive light non-special 3-face, and the vertices of a bad 3-face are called \textit{bad vertices}.

\section{The proof of Theorem \ref{thm479}} \label{sec_proof}
Theorem \ref{thm479} (equivalently, Theorem \ref{thm479-S}) follows from the following theorem.
\begin{theorem} \label{thm_main_extension}
	Let $(D,\sigma)$ be an $S_3$-labelling of a graph $G\in \cal{G}$. If the boundary of the unbounded face $f_0$ of $G$ is a good cycle, then every 3-coloring of $(D[V(f_0)],\sigma)$  extends to a 3-coloring of $(D,\sigma)$.
\end{theorem}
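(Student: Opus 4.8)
The plan is to prove the stronger Theorem~\ref{thm_main_extension} by contradiction. I would choose a counterexample $(D,\sigma)$ on a graph $G\in\mathcal{G}$, with outer boundary $f_0$ a good cycle and a precoloring $\phi$ of $(D[V(f_0)],\sigma)$ that does not extend, so that $|V(G)|$ (and then $|E(G)|$) is smallest possible. The first phase is to extract local \emph{reducible configurations}: patterns that cannot occur in such a minimal counterexample, because their presence would let me pass to a strictly smaller instance, extend by minimality, and then recolor the deleted part. The simplest are degree constraints. Every vertex outside $V(f_0)$ has degree at least $3$, since a $2^-$-vertex has at most two constrained neighbors, so $G$ minus that vertex is a smaller instance with the same good outer boundary, and the vertex is recolored last from a free color. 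Similarly I would forbid short strings and certain chords of $f_0$. The recurring requirement is that any reduction must leave the new outer boundary a \emph{good} cycle, so that minimality can be applied to the smaller graph.

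This is precisely why separating cycles must be controlled. If $C$ is a separating good cycle, I color $\ext[C]$ first (extending $\phi$ by minimality), restrict the resulting coloring to $C$, and then extend across $\intt[C]$, again by minimality; hence no separating good cycle can survive. The obstruction is that deleting vertices or splitting along a cycle can create short cycles that are \emph{bad}, and bad cycles are not admissible outer boundaries. Here Lemma~\ref{bad cycle} is decisive: a bad cycle must be a $12$- or $13$-cycle of one of the few explicit claw/biclaw shapes recorded in Figure~\ref{fig_bad_cycle}. Thus every intended reduction either goes through outright or produces one of finitely many bad patterns, each of which I would dispatch directly by coloring its cells---themselves good cycles---one at a time. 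I expect this case analysis, certifying that each reduction either succeeds or lands in a treatable bad configuration, to be the most delicate part of the argument; the signs of short cycles (positive versus negative) enter here, which is why the definitions single out \emph{positive} light $3$-faces as bad.

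Once the list of forbidden configurations is complete, I would finish by discharging. Assign charge $d(v)-4$ to each vertex $v$ and $d(f)-4$ to each face $f$; Euler's formula gives a total charge of $-8$. The only elements with negative initial charge are $3$-faces and $3$-vertices, whereas $5^+$-faces and $5^+$-vertices carry surplus, so the rules must route this surplus to the needy elements. The \emph{special subgraphs}---a $3$-face sharing an edge with a $5$-face---are the designated conduits for feeding a $3$-face from a neighbouring $5$-face, and the notions of \emph{light face}, \emph{bad $3$-face}, and \emph{bad vertex} are tailored to isolate exactly those $3$-faces and $3$-vertices that still lack charge after the obvious transfers. Using the absence of the reducible configurations, I would verify that after discharging every internal vertex and every bounded face has nonnegative charge.

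Since discharging conserves the total, it follows that the combined final charge carried by $f_0$ together with its incident vertices is at most $-8$. The good-cycle hypothesis then bounds this combined charge from below: because $f_0$ is a $13^-$-cycle with neither claws nor biclaws, the amount of charge it can be forced to release near its boundary is limited, so its final charge strictly exceeds $-8$. This contradicts the conserved total and completes the proof. I anticipate that the interior discharging verification is largely routine, and that the real work lies in the bad-cycle bookkeeping of the second paragraph and in the boundary estimate of this final step, where the good-cycle restriction is used in an essential way.
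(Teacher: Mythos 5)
Your proposal follows essentially the same route as the paper: a minimal counterexample, structural lemmas (minimum degree, strings, separating good cycles, and the classification of bad cycles via Lemma~\ref{bad cycle}) establishing the reducible configurations, and a discharging argument in which the deficit is concentrated at the outer face, whose bounded length ($d(f_0)\le 13$, from goodness) yields the contradiction --- the paper merely normalizes differently, giving $f_0$ initial charge $d(f_0)+4$ so that the total is $0$ rather than $-8$. The one point where your plan overstates what is achievable is the goal that \emph{every} internal vertex end up with nonnegative charge: under the paper's rules some internal $3$-vertices of special subgraphs finish with negative charge, and nonnegativity is recovered only in an amortized sense by summing the final charges over each special subgraph $H\in\mathcal{H}$, so your verification step would need to be relaxed accordingly.
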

To see that Theorem \ref{thm479-S} follows from Theorem \ref{thm_main_extension},
take any $S_3$-labelling $(D,\sigma)$ of a graph $G\in \mathcal{G}$.
If $D$ has no triangles, then it has girth at least 5 and is known to be DP-$3$-colorable \cite{Dvorak-Postle-2018};
otherwise, take a triangle $T$ of $D$ and a 3-coloring $\phi$ of $(T,\sigma)$. Lemma \ref{bad cycle} implies that $T$ is good. So, $\phi$ extends to both $(\ext[T],\sigma)$ and $(\intt[T],\sigma)$ by Theorem \ref{thm_main_extension}.
This results in a 3-coloring of $(D,\sigma)$.

We shall prove Theorem \ref{thm_main_extension} by contradiction.
Suppose to the contrary that Theorem \ref{thm_main_extension} is false.
Let $(D,\sigma)$ be a counterexample to Theorem \ref{thm_main_extension} with minimum $|V(G)|+|E(G)|$, where $G$ is the underlying graph of $D$.
So, we may assume that the boundary $U$ of $f_0$ is a good cycle, and there exists a 3-coloring $\phi_0$ of $(D[V(f_0)],\sigma)$ which is not extendable to $(D,\sigma)$.

By the minimality of $(D,\sigma)$, cycle $U$ has no chords.

\subsection{Structural properties}
\begin{lemma} \label{lem_separating-cycle}
	$G$ has no separating good cycles.
\end{lemma}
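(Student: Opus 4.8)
The plan is to argue by contradiction, exploiting the minimality of the counterexample $(D,\sigma)$. Suppose $G$ contains a separating good cycle $C$. Since $C$ is separating, both $\intt(C)$ and $\ext(C)$ are nonempty, so each of $\ext[C]$ and $\intt[C]$ is a \emph{proper} subgraph of $G$: the former omits every vertex of $\intt(C)$ and the latter omits every vertex of $\ext(C)$. Both are plane subgraphs of $G$, hence contain no cycle of length $4$, $7$, or $9$, and a short argument (rerouting any excursion that crosses $C$ along $C$ itself) shows that each is connected; thus $\ext[C],\intt[C]\in\mathcal{G}$. I would equip each with the orientation and signature inherited from $(D,\sigma)$.

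First I would color the outside. The boundary of the unbounded face of $\ext[C]$ is exactly $U$, which is a good cycle by hypothesis. Because $\ext[C]$ is strictly smaller than $G$, it cannot itself be a counterexample to Theorem \ref{thm_main_extension}; hence the given $3$-coloring $\phi_0$ of $(D[V(f_0)],\sigma)$ extends to a $3$-coloring $\phi_1$ of $(\ext[C],\sigma)$. Next I would color the inside, using the values $\phi_1$ now assigns on $C$: in $\intt[C]$ the cycle $C$ bounds the unbounded face, and $C$ is good by assumption, and $\intt[C]$ is again strictly smaller than $G$, so Theorem \ref{thm_main_extension} applies and the restriction $\phi_1|_{V(C)}$ extends to a $3$-coloring $\phi_2$ of $(\intt[C],\sigma)$. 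Finally I would glue: since $\phi_1$ and $\phi_2$ agree on $V(C)$, together they define a single map $\phi$ on $V(G)=V(\ext[C])\cup V(\intt[C])$ that extends $\phi_0$. Every arc of $D$ lies entirely in $\ext[C]$ or entirely in $\intt[C]$ (an arc of $C$ lies in both but is colored consistently), so $\phi$ respects every constraint $\sigma_e(\phi(x))\neq\phi(y)$ and is a proper $3$-coloring of $(D,\sigma)$. This contradicts the choice of $\phi_0$ as nonextendable, proving that no separating good cycle exists.

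The only genuinely delicate points are the bookkeeping items that license the two applications of minimality: that $\ext[C]$ and $\intt[C]$ are both strictly smaller, both connected, and both have a good cycle as their outer boundary. Separation supplies strict smallness, the two outer boundaries are $U$ and $C$ respectively (good by hypothesis), and connectivity follows from that of $G$ by the rerouting remark above. None of this requires the forbidden-length hypotheses beyond membership in $\mathcal{G}$, so I expect the whole argument to be a clean two-step extend-and-glue, with the verification of connectivity being the one spot where a careful word is needed.
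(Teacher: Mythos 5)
Your proposal is correct and is essentially the paper's own argument: extend $\phi_0$ first to $\ext[C]=D-\intt(C)$ by minimality (its outer boundary is still the good cycle $U$), then extend the induced coloring of $C$ to $\intt[C]$ (whose outer boundary is the good cycle $C$), and glue. The extra care you take over connectivity and strict smallness is sound but the route is the same.
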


\begin{proof}
	If $C$ is a  separating good cycle of $G$, then by the minimality of $(D,\sigma)$, we can extend $\phi_0$ to $(D-\intt(C),\sigma)$ and the resulting coloring of $C$ to $(\intt[C],\sigma)$. This results in an extension of $\phi_0$ to $(D,\sigma)$, a contradiction.
\end{proof}

\begin{lemma}\label{lem_2connected}
	$G$ is 2-connected.  
\end{lemma}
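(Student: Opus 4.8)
The plan is to prove that $G$ is 2-connected by contradiction, exploiting the minimality of the counterexample $(D,\sigma)$. Since $G\in\mathcal{G}$ is connected (by definition of $\mathcal{G}$) and its outer boundary $U$ is a good cycle, $G$ certainly has at least a cycle and hence at least two vertices, so it suffices to rule out the existence of a cut-vertex. Suppose for contradiction that $v$ is a cut-vertex of $G$. Then $G$ decomposes as $G = G_1 \cup G_2$ where $G_1 \cap G_2 = \{v\}$ and each $G_i$ contains at least one edge, with $V(G_1)\cup V(G_2)=V(G)$ and $E(G_1)\cup E(G_2)=E(G)$ a partition of the edges.

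First I would argue that, because the outer boundary $U$ is a \emph{cycle}, the cut-vertex $v$ interacts with $U$ in a controlled way: $U$ is 2-connected as a graph in its own right, so $U$ lies entirely in one of the two blocks — say the vertices of $U$ other than $v$ all lie in $G_1$. This means the second piece $G_2$ is "hung off" the single vertex $v\in V(U)$ and contributes nothing to the outer boundary except through $v$. The strategy is then to color $G_1$ and $G_2$ separately and glue the colorings at $v$. Concretely, I would choose plane embeddings of $G_1$ and $G_2$ inherited from $G$; each $G_i$ is smaller than $G$ in the measure $|V|+|E|$ (each omits at least one edge of the other part), each still belongs to $\mathcal{G}$ (forbidden cycle lengths and planarity are inherited by subgraphs), and the outer boundary of each remains a good cycle — for $G_1$ this is $U$ itself, and for $G_2$ one must check that its outer boundary is a good cycle or handle it via the extension theorem applied to a suitable subconfiguration.

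The key step is the gluing. I would first extend $\phi_0$ to a 3-coloring of $(G_1,\sigma)$ using the minimality of $(D,\sigma)$ applied to $G_1$ (whose outer face boundary $U$ is a good cycle and whose outer coloring is exactly $\phi_0$). This assigns a color to $v$. I would then regard this color at $v$ as a boundary condition for $G_2$: since $v$ is the unique vertex shared with $G_1$, and $v$ sits on $G_2$, I can invoke minimality again to extend the single-vertex coloring at $v$ across all of $(G_2,\sigma)$. Because $G_1$ and $G_2$ overlap only in $v$ and share no edges, the two partial colorings agree on the overlap and their union is a proper 3-coloring of all of $(D,\sigma)$ extending $\phi_0$ — contradicting the assumption that $\phi_0$ does not extend.

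The main obstacle I anticipate is verifying the hypotheses needed to apply the minimality/extension machinery to $G_2$: the extension theorem (Theorem~\ref{thm_main_extension}) is phrased for graphs whose \emph{outer boundary is a good cycle}, whereas after cutting at $v$ the piece $G_2$ may have an outer boundary that is merely a single vertex, a path, or a cycle through $v$ that needs to be checked to be good. I would handle this by observing that the outer walk of $G_2$ is a closed walk based at $v$; if $G_2$ is itself 2-connected its boundary is a genuine cycle whose length is $\le 13$ (as $G$ has no long separating structure, and Lemma~\ref{lem_separating-cycle} restricts separating good cycles) and one checks goodness directly, while if $G_2$ is not 2-connected one iterates the argument on its blocks. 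The delicate bookkeeping is to ensure that in every case the relevant boundary is a good cycle so that Theorem~\ref{thm_main_extension} applies, and that the coloring prescribed at $v$ (a single vertex, hence automatically a valid coloring of a trivial boundary) is the correct boundary datum to extend.
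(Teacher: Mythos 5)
Your first half matches the paper: split at the cut vertex $v$, note that $U$ (being a cycle, hence $2$-connected) lies in one piece, extend $\phi_0$ to that piece by minimality, and then try to propagate the single color at $v$ into the hanging piece. The genuine gap is in the second half, which is the actual content of the lemma. Theorem \ref{thm_main_extension}, and hence the minimality hypothesis, only lets you extend a $3$-coloring of an \emph{entire good cycle} bounding the outer face; a single precolored vertex is not such a boundary datum, so "invoke minimality again to extend the single-vertex coloring at $v$" is not a legitimate step. Your two fallbacks do not close this: (i) there is no reason the outer boundary of $G_2$ (or of a leaf block $B$) is a cycle of length at most $13$ --- it can be arbitrarily long, and Lemma \ref{lem_separating-cycle} gives no control here since it only forbids \emph{separating} good cycles; (ii) even if that boundary happened to be a good $13^-$-cycle, you would still need to first produce a proper coloring of the whole boundary cycle consistent with the color already assigned to $v$, and "iterating on the blocks of $G_2$" merely reproduces the same single-precolored-vertex problem at each leaf block.

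The paper's resolution is a specific device your proposal is missing: in the block $B$ hanging off the cut vertex $v$, take a face $f$ of $B$ incident with $v$ of minimum size. If $d(f)\leq 11$, then Lemma \ref{bad cycle} guarantees the boundary of $f$ is a good cycle (bad cycles have length $12$ or $13$), and a cycle with one precolored vertex can always be properly colored greedily; re-embedding $B$ with $f$ as the unbounded face, minimality finishes $B$. If instead every face of $B$ at $v$ has size at least $12$, one inserts an arc $e$ between the two neighbors of $v$ on $f$, creating a triangle $T$ with $B+e\in\mathcal{G}$, colors $T$ starting from $v$, and applies minimality to $B+e$. Without some such step --- manufacturing a \emph{small} (hence automatically good) cycle through $v$ to serve as the new outer boundary --- your argument does not go through.
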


\begin{proof}
Otherwise, we may assume that $D$ has a block $B$ and a cut vertex $v\in V(B)$.
By the minimality of $(D,\sigma)$, we can extend $\phi_0$ to $(D-V(B-v),\sigma)$.
Let $f$ be a face of $B$ containing $v$ with minimum $d(f)$. If $d(f)\leq 11$, then Lemma \ref{bad cycle} implies that the boundary cycle of $f$ is good and therefore, we can extend the coloring of $v$ to a 3-coloring of $(f,\sigma)$ and further to a 3-coloring of $(B,\sigma)$ by the minimality of $(D,\sigma)$. If $d(f)\geq 12$, then insert into $f$ an arc $e$ with an arbitrary sign between the two neighbors of $v$ on $f$, creating a 3-face, say $T$. Note that $B+e\in \mathcal{G}$. Similarly, we can extend the coloring of $v$ to $(T,\sigma)$ and further to $(B+e,\sigma)$.
In either case, the resulting coloring of $(D,\sigma)$ is an extension of $\phi_0$, a contradiction. 
\end{proof}

\begin{lemma} \label{lem_min degree}
	Every internal vertex of $G$ has degree at least 3.
\end{lemma}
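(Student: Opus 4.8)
The plan is to argue by contradiction, exploiting the minimality of $(D,\sigma)$ together with the elementary fact that a vertex of small degree imposes too few constraints to be uncolorable. Suppose some internal vertex $v$ has $d(v)\le 2$. Since $G$ is $2$-connected by Lemma \ref{lem_2connected}, its minimum degree is at least $2$, so in fact $d(v)=2$; write $N(v)=\{v_1,v_2\}$.

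First I would delete $v$ and verify that the reduced object is again an admissible instance. Set $G'=G-v$ and let $(D',\sigma)$ be the restriction of $(D,\sigma)$ to $G'$. Deleting a vertex creates no new cycles, so $G'$ contains no cycle of length $4$, $7$, or $9$; moreover $G'$ is connected because $G$ is $2$-connected, so $G'\in\mathcal{G}$. Since $v$ is internal, $v\notin V(U)$, and hence removing $v$ leaves the boundary $U$ of the unbounded face $f_0$ untouched. In particular $U$ is still a good cycle and $\phi_0$ is still a $3$-coloring of $(D'[V(f_0)],\sigma)$. Thus $(D',\sigma)$ satisfies the hypotheses of Theorem \ref{thm_main_extension}, while $|V(G')|+|E(G')|<|V(G)|+|E(G)|$.

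By the minimality of $(D,\sigma)$, the coloring $\phi_0$ extends to a $3$-coloring $\phi'$ of $(D',\sigma)$. It then remains only to color $v$ consistently. Once $\phi'(v_1)$ and $\phi'(v_2)$ are fixed, each of the two arcs joining $v$ to $v_1$ and to $v_2$ forbids at most one color for $v$, so at most two of the three colors in $[3]$ are excluded. Choosing any remaining color for $v$ yields a $3$-coloring of $(D,\sigma)$ that agrees with $\phi_0$ on $V(f_0)$, contradicting the choice of $(D,\sigma)$ as a counterexample.

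The argument is genuinely short because the substance lies entirely in the counting: a $2$-vertex sees at most two forbidden colors against a palette of three, so it can never be the source of non-extendability. The only points requiring care are the bookkeeping needed to legitimately invoke minimality, namely that $G-v$ stays connected (guaranteed by Lemma \ref{lem_2connected}), contains no forbidden cycle, and retains the same good outer boundary precisely because $v$ is internal. None of these is a real obstacle so much as a hypothesis to be checked before applying Theorem \ref{thm_main_extension}.
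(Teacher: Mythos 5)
Your proof is correct and follows essentially the same approach as the paper: delete the low-degree internal vertex, extend $\phi_0$ to the smaller instance by minimality, and then color $v$ greedily since its at most two arcs forbid at most two of the three colors. The paper states this in one line; your additional checks (connectivity of $G-v$ via Lemma \ref{lem_2connected}, preservation of $\mathcal{G}$-membership and of the good outer cycle) are exactly the right bookkeeping and introduce no circularity.
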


\begin{proof}
 If there exists an internal vertex $v$ of degree at most 2, then $\phi_0$ can extend to $(D-v,\sigma)$ and further to $(D, \sigma)$ by properly coloring $v$.    
\end{proof}

\begin{lemma}\label{lem_splitting path}
	Let $P$ be a splitting path of $U$, which divides $U$ into two cycles $U'$ and $U''$.
	If $2\leq |P| \leq 5$, then at least one of $U'$ and $U''$ has length from $|P|+1$ to $2|P|-1$.
	More precisely, since $G\in \mathcal{G}$,
	\begin{enumerate}[(1)]
		\setlength{\itemsep}{0pt} 
		\item if $|P|=2$, then at least one of $U'$ and $U''$ is a triangle.
		\item if $|P|=3$, then at least one of $U'$ and $U''$ is a 5-cycle.
		\item if $|P|=4$, then at least one of $U'$ and $U''$ is a 5- or 6-cycle.
		\item if $|P|=5$, then at least one of $U'$ and $U''$ is a 6- or 8-cycle.
	\end{enumerate}
\end{lemma}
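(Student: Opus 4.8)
The plan is to turn the statement into one inequality about the shorter of the two cycles and then to manufacture a separating good cycle, contradicting Lemma~\ref{lem_separating-cycle}. The endpoints of $P$ split $U$ into two arcs, of lengths $a$ and $b$ with $a\le b$ and $a+b=|U|$; since $U$ is good, Lemma~\ref{bad cycle} gives $|U|\le 13$. Writing $U'$ and $U''$ for the cycles obtained by closing $P$ with the short and the long arc, we have $|U'|=|P|+a\le|P|+b=|U''|$. Both lengths are at least $|P|+1$ because $a\ge1$, so the whole assertion reduces to proving $|U'|\le 2|P|-1$, i.e.\ $a\le|P|-1$; the refinements (1)--(4) are then immediate, since $U'$ is a cycle of $G\in\mathcal{G}$ and so its length avoids $4,7,9$, and deleting these values from $[|P|+1,2|P|-1]$ leaves exactly $\{3\}$, $\{5\}$, $\{5,6\}$, $\{6,8\}$ for $|P|=2,3,4,5$. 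I also record once and for all that $a\le b$ and $a+b\le13$ force $a\le6$, whence $|U'|=|P|+a\le|P|+6\le11$; by Lemma~\ref{bad cycle} the shorter cycle $U'$ is therefore automatically good.

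Suppose, for contradiction, that $a\ge|P|$. Then $b\ge a\ge|P|\ge2$, so each of the two arcs carries at least one internal vertex of $U$; the internal vertices of the long arc lie off $U'$ and those of the short arc lie off $U''$, so $\ext(U')\ne\emptyset$ and $\ext(U'')\ne\emptyset$. I first claim that not both of $U',U''$ bound faces. Indeed $|P|\ge2$ means $P$ has an internal vertex $w$, and $w$ is interior to $U$, hence internal in $G$; if both regions cut off by $P$ were faces, then the only edges at $w$ would be its two edges along $P$, giving $d(w)=2$ and contradicting Lemma~\ref{lem_min degree}. Consequently at least one of $U',U''$ has nonempty interior. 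If the shorter cycle $U'$ is such a cycle, then $\intt(U')\ne\emptyset$ and $\ext(U')\ne\emptyset$, so $U'$ is a separating good cycle, contradicting Lemma~\ref{lem_separating-cycle}; this finishes every case in which $U'$ has interior vertices.

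The remaining, and genuinely delicate, case is when the shorter good cycle $U'$ bounds a face while all the interior of $U$ is enclosed by the longer cycle $U''$. Now $U''$ is separating, but a short bookkeeping with $a\ge|P|$ and $a+b\le13$ shows that $|U''|=|P|+b$ may be $12$ or $13$ (forcing $|U|\in\{12,13\}$), so $U''$ need not be good and Lemma~\ref{lem_separating-cycle} cannot be applied to it directly. This is the main obstacle. To clear it I would appeal to the explicit anatomy of bad cycles in Lemma~\ref{bad cycle}: a bad $U''$ carries a claw or a biclaw whose centre(s) must lie in $\intt(U'')$ (a vertex of $\ext(U'')$ sits on $U$ and has degree $2$, so cannot be a centre), and this claw or biclaw partitions $U''$ into cells one of which is short, of length $3$, $5$, or $6$, hence a good cycle. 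It then remains to locate a short cell that is separating rather than a face. Here the min-degree argument is run one level deeper: because $U'$ is a face, every internal vertex of $P$ (there are $|P|-1\ge1$ of them, all lying on $U''$) must send its remaining edges into $\intt(U'')$, so these edges populate the cells; counting them against the bounded number of spokes of the claw or biclaw shows that the cells cannot all be faces, and a short cell with nonempty interior is the desired separating good cycle, again contradicting Lemma~\ref{lem_separating-cycle}. The crux of the whole lemma is exactly this last reduction to the bad $12$- and $13$-cycles and the verification that their short cells cannot simultaneously be faces.
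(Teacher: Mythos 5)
Your opening reduction (to showing $a\le |P|-1$) and your first case are fine, and the observation that the \emph{shorter} cycle $U'$ satisfies $|U'|=|P|+a\le |P|+6\le 11$, hence is automatically good by Lemma~\ref{bad cycle}, is a genuinely clean way to dispose of every configuration with $\intt(U')\neq\emptyset$ via Lemma~\ref{lem_separating-cycle}. But the rest of the argument has a real hole, and it is not a bookkeeping issue: your proof never uses the hypothesis that $U$ is a \emph{good} cycle beyond the bound $|U|\le 13$, whereas the configurations that survive your separating-cycle test are precisely the ones that can only be excluded by exhibiting a claw or biclaw of $U$. First, ``at least one of $U',U''$ fails to bound a face'' does not yield ``at least one has nonempty interior'': the extra edges at the internal vertices of $P$ guaranteed by Lemma~\ref{lem_min degree} may be chords landing on $U$ (or on $P$), leaving both interiors empty. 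For $|P|=2$ this puts a third neighbour of the middle vertex on $U$ and creates a claw of $U$; for $|P|=3$ it creates a biclaw of $U$; in neither situation is there any separating cycle to contradict.

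Second, in your ``delicate case'' the claim that the cells of the claw or biclaw of the bad cycle $U''$ cannot all be faces is false. Take $|P|=2$, so $|U'|=5$, $|U''|=12$, and the middle vertex $y$ of $P$ contributes a single edge $yy'$ into $\intt(U'')$: nothing prevents $y'$ from being the \emph{centre} of the claw of $U''$, with $yy'$ one of its three spokes. Then every cell is a face, no cycle in the whole configuration is both good and separating, and yet the configuration is impossible --- because $y'$ and $y$ are adjacent, each with two neighbours on $U$, so together they form a biclaw of $U$ (the definition explicitly allows their feet to overlap), contradicting the standing assumption that $U$ is good. This is exactly the mechanism the paper uses: in its case (2) the claw of the bad cycle plus $P$ is converted into a biclaw of $U$; case (3) additionally rules out configurations by producing forbidden $9$-cycles from stacked $5$-faces; and case (4) bootstraps off items (2) and (3) applied to newly created splitting paths. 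None of this can be recovered from Lemma~\ref{lem_separating-cycle} alone, so the final step of your plan --- ``a short cell with nonempty interior is the desired separating good cycle'' --- cannot be completed as stated.
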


\begin{proof}
	Suppose to the contrary that $|U'|, |U''| \geq 2|P|$.
	Since $U$ has length at most 13, we have $|U'|+|U''|=|U|+2|P|\leq 13+2|P|$.
	It follows that $2|P|\leq |U'|, |U''| \leq 13.$
	
	(1) Let $P=xyz$. Since $G$ has no 4-cycles, it follows that $5\leq |U'|, |U''| \leq 12.$
	By Lemma \ref{lem_min degree}, $y$ has a neighbor other than $x$ and $z$, say $y'$. The vertex $y'$ is internal since otherwise, $U$ is a bad cycle with a claw. W.l.o.g., let $y'$ lie inside $U'$. By Lemma \ref{lem_separating-cycle}, $U'$ is a bad 12-cycle. By Lemma \ref{bad cycle}, $U'$ has a claw, which together with $P$ forms a biclaw of $U'$, a contradiction.
	
	(2) Let $P=wxyz$. By Lemma \ref{lem_min degree}, we may let $x'$ and $y'$ be neighbors of $x$ and $y$ with $\{xx',yy'\}\cap E(P)=\emptyset$, respectively.
	If both $x'$ and $y'$ are external, then $U$ has a biclaw.
	Hence, we may assume that $x'\in \intt(U')$.
	So, $U'$ is a bad cycle.
	Since $G\in \mathcal{G}$,
	it follows that $U''$ is a 6-cycle and $U'$ has no 3-cell or 5-cell adjacent to $U''$.
	By Lemma \ref{bad cycle}, $U'$ must have a (6,6,6)-claw. But now there is no available location for $y'$.
	
	(3) Let $P=vwxyz$. 
	In this case, $8\leq |U'|, |U''| \leq 13.$
	Let $w'$, $x'$, and $y'$ be neighbors of $w$, $x$, and $y$ with $\{ww',xx',yy'\}\cap E(P)=\emptyset$, respectively.	
	W.l.o.g, let $x'$ lie in $\intt[U']$.
	If $x'\in V(U')$, then (2) implies that $xx'$ is a $(5,5)$-chord of $U'$.
    Further, the  (1) implies that $w',y'\notin V(U'')$ since otherwise, $G$ has a triangle adjacent to $U'$, which gives a 9-cycle.
    So, $w',y'\in \intt(U'')$. This results in a 4-cycle $[wxyy']$ if $w'=y'$; and a (5,5,5,8)-biclaw of $U''$ otherwise.
    For the latter case, $x$ is incident with three 5-faces, yielding a 9-cycle of $G$.
    It remains to assume that $x'\in \intt(U')$.
    In this case, $U'$ is a bad cycle, which implies that $|U''|=8$.
	Since $G\in \mathcal{G}$ and the statement (1), $wy\notin E(G)$ and neither $w'$ nor $y'$ is external.
	So, $w',y'\in \intt (U')$.  
	By Lemma \ref{bad cycle}, at least one of $w'$ and $y'$ coincides with $x'$, which yields a triangle adjacent to the 8-cycle $U''$, a contradiction.

	(4) Let $P=uvwxyz$. 
	In this case, $10\leq |U'|, |U''| \leq 13.$
	Let $v', w', x', y'$ be neighbors of $v, w, x, y$ with $\{vv', ww',xx',yy'\}\cap E(P)=\emptyset$, respectively.
	If $w'$ or $x'$ is external, w.l.o.g., say $w'\in V(U)\cap V(U')$, then by statements (2) and (3),
	$ww'$ is a $(3,5)$-chord of $U'$ if $w'=u$ and a $(5,5)$-chord of $U'$ otherwise, for which $|U'|\in \{6,8\}$, a contradiction.
	Hence, both $w'$ and $x'$ are internal. 
	Moreover, the assumption $|U'|+|U''|\leq 13+2|P|$ implies that not both $U'$ and $U''$ are bad cycles. W.l.o.g., let $w',x'\in \intt(U')$.
    Then $U'$ has either a claw or a biclaw. For the latter case, $[ww'x'x]$ is a 4-cycle of $G$, a contradiction. For the former case, $w'$ and $x'$ coincide, yielding a $(3,5,10)$- or $(3,5,11)$-claw of $U'$. Denote by $t$ the the remaining neighbor of $w'$ of this claw.
	Now both $uvww't$ and $tw'xyz$ are splitting 4-paths of $U$, but one of them together with $U$ forms no 5- or 6-cycles, contradicting  (3).
\end{proof}

\begin{lemma} \label{lem_string}
	For any integers $k$ and $t$ with $3\leq k\leq 12$ and $t\geq \lfloor \frac{k-1}{2}\rfloor$, and for any $k$-face $f$ of $G$ with $f\neq f_0$, the boundary of $f$ contains no $t$-strings.
\end{lemma}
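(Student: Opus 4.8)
The plan is to prove Lemma~\ref{lem_string} by contradiction, using the minimality of the counterexample $(D,\sigma)$ together with the cycle-length restrictions in $\mathcal{G}$ and the structural control provided by Lemma~\ref{lem_splitting path}. Suppose that some $k$-face $f\neq f_0$ with $3\le k\le 12$ has on its boundary a $t$-string $Q=q_1q_2\ldots q_t$ with $t\ge\lfloor\frac{k-1}{2}\rfloor$. Since $Q$ consists of $2$-vertices adjacent to no other $2$-vertices, the two neighbors of $Q$ on the boundary of $f$, say $p$ and $p'$, are $3^+$-vertices. The key observation I would exploit is that the boundary of $f$ together with $Q$ forces a short cell or a short splitting path: walking around the boundary of $f$, the string $Q$ uses $t+1$ edges and contributes $t$ consecutive degree-$2$ vertices, so the remaining portion of the boundary has length $k-(t+1)\le k-1-\lfloor\frac{k-1}{2}\rfloor=\lceil\frac{k-1}{2}\rceil$.

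First I would show that $f$ must be a separating cycle or else produce a forbidden configuration directly. The endpoints $p,p'$ of the arc of $f$ complementary to $Q$ are joined both by $Q$ (inside the boundary of $f$) and by the complementary boundary path $R$ of $f$; since $Q$ has only $2$-vertices, each $q_i$ contributes no chord, so the only way to reroute is through $p$ and $p'$. The strategy is to build a shorter graph $G'$ by contracting or deleting the string: delete the interior $2$-vertices of $Q$ and identify the structure so that $p$ and $p'$ become adjacent (inserting an arc with an arbitrary sign if needed, as in the proof of Lemma~\ref{lem_2connected}), obtaining a graph $G'\in\mathcal{G}$ with strictly fewer vertices. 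I would then extend $\phi_0$ to $(D',\sigma')$ by minimality, and finally color the $t$ deleted $2$-vertices greedily one at a time: each $q_i$ has exactly two neighbors, so at most two colors are forbidden, leaving a free color in $[3]$. The crux is to guarantee that the newly added edge $pp'$ does not create a cycle of forbidden length $4,7,9$, nor a bad boundary cycle, so that $G'$ genuinely lies in $\mathcal{G}$ and the minimality hypothesis applies.

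The main obstacle, as I see it, is precisely this length-control step: closing off the string with a single edge $pp'$ shortens the face $f$ from a $k$-cycle to a $(k-t)$-cycle, and I must ensure the new short cycle avoids lengths $4,7,9$ while the rest of $G$ acquires no forbidden cycle through $pp'$. Here the hypothesis $t\ge\lfloor\frac{k-1}{2}\rfloor$ is exactly what makes the residual cycle short enough that Lemma~\ref{bad cycle} and Lemma~\ref{lem_splitting path} become applicable. Concretely, the path $R$ from $p$ to $p'$ along the non-string part of $f$ is a splitting path of $U$ (or, more carefully, a splitting path of some good cycle after using Lemma~\ref{lem_separating-cycle} to rule out separating good cycles), and $|R|=k-t\le\lceil\frac{k+1}{2}\rceil$; feeding this into the length dichotomy of Lemma~\ref{lem_splitting path} pins down the admissible cell lengths and rules out the dangerous cases one at a time.

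I would organize the case analysis by the value of $k$, or more efficiently by the length $\ell=k-t-1$ of the complementary path, since $\ell$ ranges only over a handful of small values. For each value of $\ell$ I would either exhibit a forbidden short cycle (length $4,7,$ or $9$) created by the presence of the string, contradicting $G\in\mathcal{G}$, or invoke Lemma~\ref{lem_splitting path} to force a cell that is too short and hence produces such a cycle, or else complete the deletion-and-greedy-extension argument to contradict the minimality of $(D,\sigma)$. The $2$-vertex greedy coloring at the end is routine since every $2$-vertex sees at most two colors; the genuine work is confined to the length bookkeeping around $pp'$, which I expect to be where the bound on $t$ is tight and where the forbidden lengths $4,7,9$ do all the heavy lifting.
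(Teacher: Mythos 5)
There is a genuine gap, and it sits exactly where you locate ``the main obstacle'': the insertion of the edge $pp'$. After deleting the $t$ interior $2$-vertices of the string, the residual boundary path $R$ of $f$ has length $k-t-1$, so the new cycle $R+pp'$ has length $k-t$. The hypothesis $t\ge\lfloor\frac{k-1}{2}\rfloor$ makes this cycle \emph{short}, but it does not steer it away from the forbidden lengths: a $6$-face with a $2$-string yields a $4$-cycle, a $12$-face with a $5$-string yields a $7$-cycle, and a $3$-face with a $1$-string yields a multi-edge. In these cases $G'$ simply is not in $\mathcal{G}$ (and one must also control every \emph{other} cycle through $pp'$, not just the face $f$), so the minimality hypothesis cannot be invoked. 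You acknowledge the obstacle but assert that Lemmas \ref{bad cycle} and \ref{lem_splitting path} resolve it; they do not, because they say nothing about cycles through a newly inserted edge. The paper's proof never inserts an edge. It first observes (via Lemma \ref{lem_min degree}) that every $2$-vertex is external, so the string lies on $U$ and the complementary path $P=\partial f - V(L)$ contains a splitting $q$-path $Q$ of $U$ with $2\le q\le k-t-1\le 6$. For $2\le q\le 5$ it gets a purely structural contradiction with no surgery and no coloring: Lemma \ref{lem_splitting path} forces the cut-off cycle $C'$ to have length between $q+1$ and $2q-1$, hence $C'$ is good, hence non-separating by Lemma \ref{lem_separating-cycle}, and then the middle vertices of $Q$ (internal, so of degree at least $3$) have nowhere to put their third neighbor. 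Only in the single case $q=6$ (forcing $k=12$, $t=5$) does it use minimality, and there it merely \emph{deletes} $V(L)$; the new outer boundary $(U\setminus V(L))\cup P$ is shown to be a good cycle and the extension goes through.

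A second, related oversight: since the string vertices lie on $U$, they are already colored by $\phi_0$ and cannot be ``colored greedily one at a time'' at the end --- the lemma is about extending a prescribed boundary coloring. (They in fact need no recoloring: all their neighbors, including $p$ and $p'$, lie on $U$, so their $\phi_0$-colors remain consistent.) For the same reason your inserted arc $pp'$ joins two precolored vertices of $U$, so its sign cannot be arbitrary and the whole outer-boundary bookkeeping changes; this is precisely the situation that condition $(a)$ of Lemma \ref{pro_operation} is designed to exclude. To repair your argument you would need to drop the edge insertion, establish that the string lies on $U$, and then run the splitting-path analysis on $P$ as the paper does.
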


\begin{proof}
	Suppose to the contrary that the boundary of $f$, say $C$, contains a $t$-string $L$.
	Let $D'=D-V(L)$ and let $U'$ be the boundary of the unbounded face of $D'$. 
	Note that the path $P=C-V(L)$ contains a splitting $q$-path $Q$ of $U$ with $q\geq 2$. 
	So, $q\leq |P|=k-t-1\leq \frac{k}{2}\leq 6$ by assumption.
	If $2\leq q\leq 5$, then by Lemma \ref{lem_splitting path}, $Q$ divides $U$ into two cycles, one contains $f$ and the other (say $C'$) has length from $q+1$ to $2q-1$. Clearly, $C'$ is a good cycle. By Lemma \ref{lem_min degree}, each middle vertex of $Q$ is incident with a chord of $C'$, which is impossible. 
	Hence, we may assume that $q=6$. In this case, $U'$ is a cycle.
    Note that $|U'|=|U|+k-2(t+1)\leq |U| \leq 13$. 
    If $U'$ is a bad cycle, then by Lemma \ref{lem_min degree}, each middle vertex of $Q$ is incident with a chord of $U'$, which is impossible.
    So, $U'$ is a good cycle. We can extend $\phi_0$ to $(D'[V(U')],\sigma)$ and further to $(D',\sigma)$ by the minimality of $(D,\sigma)$.  
\end{proof}

\begin{remark}\label{rem_switch}
	If $(D,\sigma')$ is an $S_3$-labelled graph switch-equivalent to $(D,\sigma)$, then $(G,\sigma')$ is a minimum counterexample to Theorem \ref{thm_main_extension} as well.
\end{remark}

\begin{lemma}\label{lem_n3face}
	$(D,\sigma)$ has no negative light 3-faces. 
\end{lemma}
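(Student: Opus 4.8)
The plan is to argue by contradiction from the minimality of $(D,\sigma)$, so suppose $T=[v_1v_2v_3]$ is a negative light $3$-face. Since $T$ is light, each $v_i$ is an internal $3$-vertex; let $u_i$ be the outer neighbor of $v_i$, so that $u_i\notin V(T)$ and the three neighbors of $v_i$ are exactly $v_{i-1},v_{i+1},u_i$ (indices mod $3$). First I would delete the triangle, setting $D'=D-\{v_1,v_2,v_3\}$. Because $v_1,v_2,v_3$ are internal and $T$ bounds a face of $G$, hence encloses no vertex, the boundary of the unbounded face of $D'$ is still $U$, which is good, and $D'$ is a connected plane graph in $\mathcal{G}$ with $|V(D')|+|E(D')|<|V(G)|+|E(G)|$. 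By the minimality of $(D,\sigma)$, the boundary coloring $\phi_0$ extends to a $3$-coloring $\phi'$ of $(D',\sigma)$.

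Next I would show that $\phi'$ extends across the triangle, contradicting the choice of $(D,\sigma)$. For each $i$, the arc joining $v_i$ and $u_i$ together with its sign forbids exactly one color for $v_i$ once $\phi'(u_i)$ is fixed, since the sign is a permutation and hence injective; thus $v_i$ retains a list $L_i\subseteq[3]$ with $|L_i|=2$. Extending $\phi'$ to $T$ therefore amounts to properly coloring the $S_3$-labelled triangle $(T,\sigma)$ from the lists $L_1,L_2,L_3$, and the lemma reduces to the self-contained claim that \emph{a negative triangle admits a proper coloring from any prescribed lists of size $2$}.

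The hard part is this claim. Using Remark \ref{rem_switch} I may switch $(T,\sigma)$ into the canonical form in which $v_1v_2$ and $v_2v_3$ are positive and $v_3v_1$ carries a sign $\tau\neq id$; switching changes neither the negativity of $T$ nor its colorability from the correspondingly relabelled lists. For each edge I record the conflict relation it induces between the two lists at its ends; since each sign is a permutation, this relation is always a partial matching of size at most $2$. If some edge induces a matching of size at most $1$, then one of its endpoints has a color that conflicts with nothing across that edge: fixing that color there and then coloring the other two vertices in turn succeeds, because each subsequently colored vertex has at most one forbidden color among its two. The remaining case is that all three edges induce perfect matchings; then the two positive edges force $L_1=L_2=L_3$, and tracing the three matchings around the triangle shows colorability to be equivalent to a monodromy permutation of the $2$-element list being trivial. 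A direct check in the canonical form shows this monodromy is trivial exactly when $T$ is negative (for a positive triangle it is the transposition, recovering the uncolorability of $K_3$ from equal $2$-lists), and since the monodromy is switch-invariant this settles every negative triangle. Extending $\phi'$ over $T$ then yields a $3$-coloring of $(D,\sigma)$ extending $\phi_0$, the desired contradiction. The one routine point I would still verify is that deleting the three triangle vertices leaves $U$ as the outer boundary of a \emph{connected} graph, which I would justify from the $2$-connectivity of $G$ (Lemma \ref{lem_2connected}) together with the fact that $T$ encloses no vertex.
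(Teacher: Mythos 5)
Your proof is correct and takes essentially the same route as the paper's: delete the three vertices of the light triangle, extend $\phi_0$ to the rest by minimality, note that each triangle vertex retains a $2$-list, and then split into the case where some edge's induced conflict relation is not a perfect matching (color greedily) versus the case where all three are perfect matchings (where negativity of the triangle forces the forced coloring to close up). The paper states this last step more tersely ("since $[uvw]$ is negative under $\sigma'$, it can be properly colored") while you spell out the monodromy computation, but the argument is the same.
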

\begin{proof}
	Otherwise, let $[uvw]$ be a negative light 3-face. 
	By the minimality of $(D,\sigma)$, $\phi_0$ can extend to $(D-\{u,v,w\},\sigma)$.
	Since $[uvw]$ is light, each vertex of $[uvw]$ has one neighbor already colored and so, it has two available colors for itself. Denote by $\sigma'$ the restriction of $\sigma$ on these available colors. 
	If there exists an edge $e$ of $[uvw]$, say $e=uv$, such that $\sigma'_e$ is not a full permutation,  then $u$ has an available color that is not in conflict  with any available  color for $v$. Assign $u$ with that color and consequently, we can properly color $w$ and $v$ in turn.
	Assume $\sigma'_c$ is a full permutation for each edge $c$ of $[uvw]$. 
    Since $[uvw]$ is negative in $(D,\sigma)$, it is also negative under $\sigma'$. Therefore, the vertices of $[uvw]$ can be properly colored.
\end{proof}

\begin{lemma}\label{pro_operation}
	Let $(D',\sigma)$ be a connected plane graph obtained from $(D,\sigma)$ by deleting a set of internal vertices and either identifying two other vertices without merging edges or inserting a new arc.
	If we
	\begin{enumerate}[($a$)]
		\item identify no two vertices of $U$ and create no edges connecting two vertices of $U$, and
		\item create no $9^-$-cycles,
	\end{enumerate}
	then $\phi_0$ can extend to $(D',\sigma)$.
\end{lemma}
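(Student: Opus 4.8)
The plan is to reduce to the minimality of $(D,\sigma)$. I would show that $(D',\sigma)$ is a connected plane graph in $\mathcal{G}$ whose unbounded face is still bounded by $U$, that $\phi_0$ remains a proper $3$-coloring of this boundary, and that $|V(G')|+|E(G')|<|V(G)|+|E(G)|$, where $G'$ is the underlying graph of $D'$. Granting these, $(D',\sigma)$ is strictly smaller than the minimum counterexample, hence is not itself a counterexample to Theorem \ref{thm_main_extension}; as its outer boundary $U$ is good, the coloring $\phi_0$ must extend to $(D',\sigma)$, which is exactly the claim.

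First I would verify $G'\in\mathcal{G}$ together with the size drop. Deleting vertices never creates a cycle, and by condition $(b)$ every cycle of $G'$ that is genuinely new (one arising from the identification or from the inserted arc) has length at least $10$; all other cycles of $G'$ already occur in $G$ and so avoid lengths $4,7,9$. Thus $G'$ has no cycle of length $4$, $7$, or $9$. For the size, identifying two vertices decreases $|V|$ by one while not increasing $|E|$ (no edges are merged), so the sum strictly drops; when instead an arc is inserted, the accompanying deletion is nonempty, and since each internal vertex has degree at least $3$ by Lemma \ref{lem_min degree}, deleting even one of them removes at least three edges, which more than compensates for the single inserted arc.

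The crux is that the outer boundary of $(D',\sigma)$ is still the good cycle $U$. Since only internal vertices are deleted, every vertex and edge of $U$ survives; by condition $(a)$ no two vertices of $U$ are identified and no edge joining two vertices of $U$ is created, so $U$ remains a chordless cycle bounding the unbounded face and $\phi_0$ stays a proper coloring of $(D'[V(U)],\sigma)$. It remains to exclude a claw or biclaw of $U$ created by the operation. Suppose $U$ acquires one; since $U$ is good in $G$, it is genuinely new, and since $|U|\le 13$ the cycle $U$ is now a bad $13^-$-cycle, so Lemma \ref{bad cycle} pins its configuration down to one of the $(3,5,10)$-, $(5,5,8)$-, $(6,6,6)$-, or $(3,5,11)$-claws or the $(5,5,5,8)$-biclaw. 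A short inspection of this list, using that each configuration has at most one cell of length exceeding $9$, shows that at least one newly created cell $C^{*}$ has length at most $9$; but $C^{*}$ is then a newly created $9^-$-cycle, contradicting condition $(b)$. Hence $U$ stays good.

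The step I expect to be the main obstacle is this last one: a priori the identification or the new arc could attach a fresh spoke to $U$ and turn the good cycle $U$ into a bad one, and the content of the argument is that condition $(b)$ already forbids this. It does so precisely because every bad configuration allowed by Lemma \ref{bad cycle} has at most one cell longer than $9$, so any newly created claw or biclaw is forced to introduce a short, newly created cell. Once $U$ is confirmed to be good while $(D',\sigma)$ is smaller and still properly colored on $U$ by $\phi_0$, the desired extension follows immediately from the minimality of $(D,\sigma)$.
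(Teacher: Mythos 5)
Your proposal is correct and follows essentially the same route as the paper: reduce to the minimality of $(D,\sigma)$ after checking that the new graph lies in $\mathcal{G}$, that $U$ still bounds the unbounded face with $\phi_0$ proper on it, and that $U$ stays good because a newly created claw or biclaw would, by Lemma~\ref{bad cycle}, force a newly created short cell, contradicting item~$(b)$. The only point worth making explicit in your ``short inspection'' is that at least \emph{two} cells of the offending configuration are newly created (the new vertex or arc lies on at least two cells, and the configuration cannot live entirely in $G$ since $U$ is good there), which is what lets the observation that at most one cell exceeds length $9$ close the argument --- this is precisely the paper's count $k=2$.
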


\begin{proof}
	The item $(a)$ guarantees that $U$ is unchanged and bounds $D'$, and that $\phi_0$ is a 3-coloring of $(D'[V(U)],\sigma)$.
	By the item $(b)$, $G'$ is simple and $G'\in \cal{G}$.
	Hence, to extend $\phi_0$ to $(D',\sigma)$ by the minimality of $(D,\sigma)$, it suffices to show that $U$ is a good cycle in $G'$.
	
	Suppose to the contrary that $U$ is a bad cycle of $G'$, i.e., $U$ has a claw or biclaw, say $H$. 
	Assume the new vertex resulting from the identification is incident with $k$ cells of $H$. If $k= 0$, then $H$ is a claw or biclaw  in $G$. Since the operation does not merge edges, $k\neq 1.$ Therefore, $k= 2$.  
	It follows by Lemma \ref{bad cycle} that there is a $6^-$-cycle created, contradicting the item $(b)$. 
	For the case of inserting a new arc, say $e$, we can similarly deduce that both cells of $H$ incident with $e$ are created, yielding a similar contradiction as above.
\end{proof}

\begin{lemma} \label{lem_5face}
	$(G,\sigma)$ has no light 5-faces.
\end{lemma}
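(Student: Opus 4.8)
The plan is to show that a light $5$-face is reducible, mirroring the treatment of $3$-faces in Lemma \ref{lem_n3face}. Suppose for contradiction that $f=[v_1v_2v_3v_4v_5]$ is a light $5$-face; thus each $v_i$ is an internal $3$-vertex and has a unique third neighbour $u_i\notin V(f)$. Since $G$ has no $4$-cycles, $u_i=u_j$ forces $v_i,v_j$ to be adjacent (coincidences only create triangles on the edges of $f$), and in particular $u_i\notin V(f)$, so $C:=[v_1\ldots v_5]$ is chordless. First I would delete $V(f)$ and set $D'=D-V(f)$. As every $v_i$ is internal, the outer boundary $U$ is untouched and remains good, and $D'\in\mathcal{G}$; by the minimality of $(D,\sigma)$ (applied to each component of $D'$ if it is disconnected) $\phi_0$ extends to a $3$-coloring $\phi'$ of $(D',\sigma)$. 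After this extension every $v_i$ has exactly one coloured neighbour, namely $u_i$, and hence a set $L_i\subseteq[3]$ of two admissible colours. The task is thereby reduced to colouring the cycle $C$ subject to the edge-signs of $C$ and the lists $L_i$.

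The engine is the following criterion, proved exactly as in Lemma \ref{lem_n3face}: an $S_3$-labelled cycle in which every vertex has two admissible colours is colourable unless the sign of every edge restricts to a bijection between the two admissible sets at its ends (a \emph{full} edge) and the cycle is positive. If some edge of $C$ is not full, a colour at one endpoint imposes no constraint across it, so I colour $C$ greedily starting there. If all edges of $C$ are full but $C$ is negative, then---because $|C|=5$ is odd---the forced colour propagated once around $C$ returns to its start as the identity, and $C$ is colourable. In either case $\phi'$ extends to a $3$-coloring of $(D,\sigma)$, contradicting that $(D,\sigma)$ is a counterexample.

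The only surviving case, and the main obstacle, is that $C$ is positive and all five edges are full. Switching at the internal vertices $v_1,\dots,v_5$ (legitimate by Remark \ref{rem_switch}) I may assume $C$ is all-positive, whereupon ``all edges full'' means $L_1=\cdots=L_5=\{a,b\}$ for a common pair, and no proper colouring of $C$ avoiding the third colour $c$ at every $v_i$ exists. To break this I would recolour one outer neighbour: if some $u_j$ admits a colour other than $\phi'(u_j)$ in $(D',\sigma)$, recolouring it shifts $L_j$ and renders an incident edge of $C$ non-full, so the previous paragraph finishes the proof. The hard part is to guarantee such a recolourable $u_j$, i.e. to rule out the configuration in which every $u_j$ is frozen. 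Here I would exploit the structural lemmas already available---$2$-connectivity (Lemma \ref{lem_2connected}), the minimum-degree bound (Lemma \ref{lem_min degree}), the absence of long strings on small faces (Lemma \ref{lem_string}), and the absence of negative light $3$-faces (Lemma \ref{lem_n3face})---to show that five simultaneously frozen neighbours would force a forbidden short cycle, a separating good cycle (contradicting Lemma \ref{lem_separating-cycle}), or a claw/biclaw on $U$; alternatively, a Kempe exchange on two colour classes of $\phi'$ in $D'$ could free one $u_j$ while leaving $\phi_0$ unchanged on $U$. Making this final step unconditional is where the real work lies.
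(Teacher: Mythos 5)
Your reduction of the problem to colouring the $5$-cycle $C$ with $2$-lists is sound, and your case analysis of that list-colouring problem is correct: the only obstruction is a positive $C$ all of whose edges are full, which (after switching) means all five lists coincide and the cycle behaves like an odd cycle needing a third colour. This matches the paper's first step, which disposes of the negative case by the same argument as Lemma \ref{lem_n3face}. But the surviving case is not a loose end to be patched --- it is the entire content of the lemma, and your proposal does not close it. The two escape routes you sketch are both problematic. Recolouring a single $u_j$ inside $(D',\sigma)$ is not something the minimality hypothesis gives you: the induction produces \emph{one} extension of $\phi_0$, with no control over whether any $u_j$ has a second admissible colour, and a frozen configuration cannot be excluded by the structural lemmas you list (none of them constrain the signature enough to forbid it). A Kempe exchange is worse: in the $S_3$-labelled (DP) setting the colour classes have no global meaning, so swapping two colours along a chain does not preserve properness, and even in the classical setting the chain could reach $U$ and disturb $\phi_0$.

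The paper's key idea, which is absent from your proposal, is to build the constraint you need into the auxiliary graph \emph{before} invoking minimality, rather than trying to repair the colouring afterwards. Choose $v_1$ on $f$ incident with two $8^+$-faces (possible since $G$ has no $4$-, $7$- or $9$-cycles), delete $V(f)$, and \emph{insert a positive arc} between the outer neighbours $v_2'$ and $v_5'$. Lemma \ref{pro_operation} --- whose hypotheses are verified using Lemmas \ref{bad cycle} and \ref{lem_splitting path} and the choice of $v_1$ --- guarantees $\phi_0$ still extends to this modified graph, and now $v_2'$ and $v_5'$ are forced to receive distinct colours. Hence $v_1'$ disagrees with one of them, say $v_5'$; colouring $v_1$ with the colour of $v_5'$ and then proceeding greedily $v_2\rightarrow v_3\rightarrow v_4\rightarrow v_5$ succeeds because $v_5$ sees only two distinct forbidden colours. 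This identification/insertion device, together with the nontrivial verification that the inserted arc creates no $9^-$-cycle and no chord-like structure on $U$, is the real work of the proof, and it is precisely the step your proposal leaves open.
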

\begin{proof}
	Otherwise, let $f=[v_1v_2\ldots v_5]$ be a light 5-face. If $f$ is negative,
	then a similar proof as for Lemma \ref{lem_n3face} shows that $\phi_0$ can extend to $(D,\sigma)$. Assume that $f$ is positive.
	By Remark \ref{rem_switch}, we may choose $(D,\sigma)$ so that the edges incident with $v_i$ are all positive for each $1\leq i\leq 5$.
	Since $G\in \mathcal{G}$, $f$ has a vertex incident with two $8^+$-faces, w.l.o.g., say $v_1$.
	For $1\leq i\leq 5$, denote by $v_i'$ the remaining neighbor of $v_i$.
    Let $D'$ be obtained from $D$ by removing $V(f)$ and inserting a positive arc $v_2'v_5'$.
	
	We will show that both items of Lemma \ref{pro_operation} hold true for this graph operation.
	If a $9^-$-cycle is created, 
	then this cycle corresponds to a $8^-$-path between $v_2'$ and $v_5'$ in $G$, which together with $v_5'v _5v_1v_2v_2'$ forms a $12^-$-cycle, say $C$.
	If $v_3,v_4 \in \intt(C)$, then $C$ is a bad 12-cycle with a biclaw, a contradiction to Lemma \ref{bad cycle}.
	Thus either $v_1'\in V(C)$ or $v_1'\in \intt(C)$.
	In the former case, since $v_1$ is incident with two $8^+$-faces but $|C|\leq 12$, a contradiction can be derived. 
	In the latter case, $C$ is a bad 12-cycle containing two $8^+$-faces inside, a contradiction to Lemma \ref{bad cycle}.
	Therefore, $(b)$ holds.
	If both $v_2'$ and $v_5'$ are external,
	then $v_5'v _5v_1v_2v_2'$ is a splitting 4-path of $U$ in $G$, which divides $U$ into two cycles, one has length 5 or 6 by Lemma \ref{lem_splitting path}. It follows that inserting the arc $v_2'v_5'$ creates a cycle of length 6 or 7, contradicting $(b)$.  Hence,  $(a)$ holds.
		
	By Lemma \ref{pro_operation}, $\phi_0$ can extend to $(D',\sigma)$. Note that $v_5'$ and $v_2'$ received distinct colors.
	So, $v_1'$ is of color different from at least one of $v_5'$ and $v_2'$, w.l.o.g., say $v_5'$. Color $v_1$ the same as $v_5'$ and consequently, the resulting coloring can extend to $(D,\sigma)$ in the order $v_2\rightarrow v_3\rightarrow v_4\rightarrow v_5$.  
\end{proof}

\begin{lemma} \label{lem_good_path}
If $uvxyz$ is a path of $(G,\sigma)$ with $x$ and $y$ being two internal 3-vertices and $uv$ being contained in a bad 3-face $[uvw]$, then $z$ is neither an internal 3-vertex nor a neighbor of $x$ with $[xyz]$ being non-special. 
In particular, the outer neighbor of a bad vertex is not bad.
\end{lemma}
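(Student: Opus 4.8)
The plan is to argue by contradiction: assume the displayed configuration occurs and use the minimality of $(D,\sigma)$ to extend $\phi_0$ after deleting the relevant internal $3$-vertices, contradicting that $(D,\sigma)$ is a counterexample. Throughout, note that $x$ is the outer neighbor of $v$: since $u,v,w$ are internal $3$-vertices and $uvxyz$ is a path, the third neighbor of $v$ is exactly $x$. By Remark \ref{rem_switch} I may switch so that the bad triangle $[uvw]$ is all-positive. I would first dispose of the ``in particular'' clause, as it is just the instance of the main statement in which a second bad $3$-face plays the role of $[xyz]$: if the outer neighbor $q$ of a bad vertex $p$ were bad, put $p=v$ in its bad $3$-face $[uvw]$, so that $q=x$, and let $[xyz]$ be a bad $3$-face through $x$ (here $v$ cannot lie on this triangle, else $vx$ would belong to two triangles and create a $4$-cycle). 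Then $uvxyz$ satisfies the hypotheses, $z$ is a neighbor of $x$, and $[xyz]$, being bad, is non-special---contradicting the conclusion.

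For the main statement I would handle the two ways the conclusion can fail: Case~A, in which $z$ is an internal $3$-vertex, and Case~B, in which $z$ is adjacent to $x$ and $[xyz]$ is a non-special $3$-face. Writing $x'$ and $y'$ for the third neighbors of $x$ and $y$, the mechanism I rely on is the following coloring fact. If I delete $\{u,v,w\}$ \emph{together with} $x$, then once the rest of the graph is colored, $u$ and $w$ retain their fixed colored outer neighbors $u'$ and $w'$, giving fixed forbidden colors $a_u$ and $a_w$, while the color $a_v$ forbidden on $v$ becomes a free parameter controlled by my choice of $\phi(x)$. Since the triangle is all-positive, each of $u,v,w$ has a list of size $2$ and the triangle re-colors properly \emph{unless} $a_u=a_w=a_v$; that is, coloring back fails only if $\phi(x)$ is forced onto the single value making $a_v=a_u=a_w$. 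Thus it suffices to choose $\phi(x)$ avoiding one prescribed color.

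Accordingly, in Case~A I would delete $S=\{u,v,w,x,y,z\}$. In Case~B the face $[xyz]$ is light precisely when $z$ is an internal $3$-vertex, so by Lemma \ref{lem_n3face} (which rules out the negative light case) either $[xyz]$ is a second bad $3$-face and again $S=\{u,v,w,x,y,z\}$, or $z$ is not an internal $3$-vertex and a shorter deletion suffices. In each case I would verify that $D'=D-S$ is a legitimate smaller instance: no vertex of $S$ lies on $U$, so $U$ is unchanged and still good, and $G'\in\mathcal{G}$; connectivity and, where an auxiliary arc is needed to preserve it, admissibility, are checked via Lemmas \ref{lem_separating-cycle}, \ref{lem_splitting path}, and \ref{pro_operation}. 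Extending $\phi_0$ to $D'$ by minimality colors all exposed outer neighbors ($u',w',x',y'$, and in Case~A the two remaining neighbors of $z$). I would then color the vertices of $S$ back from the path end inward: each deleted $3$-vertex has at most two colored neighbors and hence a free color, and I would use the chain $z,y$ to steer $\phi(y)$, and through the edge $xy$ the value of $\phi(x)$, away from the unique forbidden value, finally applying the positive-triangle fact to color $[uvw]$ (and, in the two-triangle subcase, $[xyz]$). This produces an extension of $\phi_0$ to $(D,\sigma)$, the desired contradiction.

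The main obstacle is exactly the \emph{monochromatic deadlock}: the extension handed to me by minimality might color the outer neighbors so that $\phi(x)$ is forced onto the one bad value. In the two-triangle subcase this is the genuine failure where $u',w',y'$, and the third neighbor of $z$ all forbid a common color, and the two all-positive triangles linked by $vx$ cannot be re-colored at all. Overcoming this is precisely where the hypotheses that $x,y$ (and in Case~A also $z$) are internal $3$-vertices do the work: the chain of free $3$-vertices supplies enough independent color choices to re-route $\phi(x)$, and in the two-triangle subcase one must additionally show the four outer neighbors cannot be driven to a single color---either because some of them coincide or are adjacent, or by re-choosing the extension---so that $\phi(v)$ and $\phi(x)$ can be separated. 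Establishing that this freedom is always present, i.e.\ that the deadlock never survives, is the technical heart of the argument and the step I expect to demand the most careful case analysis.
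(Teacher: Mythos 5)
There is a genuine gap, and you have in fact located it yourself: the ``monochromatic deadlock'' you describe at the end is not a technicality to be deferred but the entire content of the lemma, and your pure-deletion scheme cannot resolve it. If you delete $S=\{u,v,w,x,y,z\}$ and extend $\phi_0$ to $D-S$ by minimality, the extension is handed to you once and for all; you have no control over the colors of $u',w',x',y'$ or of the two outside neighbors of $z$. It can happen that $z$'s two outside neighbors leave it a single admissible color, which then forces $\phi(y)$ (via $y'$ and $z$), which then forces $\phi(x)$ (via $x'$ and $y$) onto exactly the common color $\phi(u')=\phi(w')$, and the all-positive triangle $[uvw]$ with $a_u=a_v=a_w$ is uncolorable. ``Re-choosing the extension'' is not available to you --- minimality gives existence of one extension, not a choice among several --- and there is no structural reason why the four outer neighbors should coincide or be adjacent. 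So the step you call the technical heart is simply missing, and with it the proof.

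The paper's device for killing the deadlock is different in kind: instead of deleting all six vertices, it removes only $u,v,x,y,z$ (keeping $w$) and, crucially, \emph{identifies} $x'$ with $u'$ before invoking minimality (this is what Lemma \ref{pro_operation} is for). The extension to the modified graph then automatically satisfies $\phi(x')=\phi(u')$, so when $x$ is colored (after $z$ and $y$, each of which has at most two colored neighbors in the correct order) it necessarily avoids $\phi(u')$; with $w$ already colored, the lists of $u$ and $v$ are $[3]\setminus\{\phi(u'),\phi(w)\}$ and $[3]\setminus\{\phi(x),\phi(w)\}$, which can only clash if $\phi(u')=\phi(x)$ --- exactly what the identification forbids. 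Of course this shifts the burden onto verifying conditions $(a)$ and $(b)$ of Lemma \ref{pro_operation} for the identification (no $9^-$-cycle created, no two vertices of $U$ merged), which occupies most of the paper's proof and uses the bad-cycle classification of Lemma \ref{bad cycle} together with the hypothesis that $[uvw]$ is a \emph{bad} (hence non-special, hence flanked by $10^+$-faces) $3$-face. Your proposal contains neither the identification idea nor a substitute for it, so as written the argument does not go through.
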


\begin{proof}
	Assume first that $z$ is an internal 3-vertex.
    Denote by $x'$ the remaining neighbor of $x$ and w.l.o.g., let $wvxx'$ be on the boundary of a face.
    Denote by $u'$ and $w'$ the remaining neighbors of $u$ and $w$, respectively.
    Since $[uvw]$ is positive, by Remark \ref{rem_switch}, we may choose $(D,\sigma)$ so that all the edges incident with $u$, $v$, or $x$ are positive.
    Let $D'$ be obtained from $D$ by removing $u,v,x,y,z$ and identifying $x'$ with $u'$. 
    
 	We will show that both items of Lemma \ref{pro_operation} hold true for this graph operation. 
 	If a $9^-$-cycle is created, 
 	then this cycle corresponds to a $9^-$-path between $x'$ and $u'$ in $G$, which together with $u'uvxx'$ forms a $13^-$-cycle, say $C$.
 	If $y,z \in \intt(C)$, then $C$ is a bad 13-cycle with a (5,5,5,8)-biclaw by Lemma \ref{bad cycle}, for which $[uvw]$ is adjacent to a 5- or 8-cell, contradicting the assumption that $[uvw]$ is a bad 3-face.
 	So by planarity, either $w'\in V(C)$ or $w'\in \intt(C)$.
 	For the former case, $w$ is incident with two $10^+$-faces but $|C|\leq 13$, a contradiction. 
 	For the latter case, $C$ is a bad cycle containing two $10^+$-faces inside, contradicting Lemma \ref{bad cycle}.
 	Therefore, the item $(b)$ holds.
 	If the item $(a)$ fails, then $u'uvxx'$ is contained in a splitting $5^-$-path of $U$ in $G$.
 	By Lemma \ref{lem_splitting path}, this splitting path divides $U$ into two cycles, one of which has length at most 8. This yields that identifying $u'$ and $x'$ creates a $4^-$-cycle, contradicting the item $(b)$. Hence, the item $(a)$ holds.
 	
 	By Lemma \ref{pro_operation}, $\phi_0$ can extend to $(D',\sigma)$ and further to $(D,\sigma)$ as follows. Since all the vertices of $uvxyz$ have degree 3, we can properly color $z$, $y$, and $x$ in order. Since all the edges incident with $u$, $v$, or $x$ are positive, $u'$ and $x$ must be of distinct colors and therefore, we can always properly color $u$ and $v$. 
 	
 	Next, let $z$ be a neighbor of $x$ with $[xyz]$ being non-special. The argument is almost same as the previous case. In this case, $x'$ coincides with $z$. So, $z$ will not be removed in the graph operation. The remaining difference is that, since $[xyz]$ is non-special, $y$ is incident with two $10^+$-faces, which also yields that $y\notin \intt(C)$ in the proof for the item $(b)$.
\end{proof}

\subsection{Discharging in $G$}\label{secch}
In what follows, let $V$, $E$, and $F$ be the set of vertices, edges, and faces of $G$, respectively. 
For each $x\in V\cup F$,
the \textit{initial charge} $ch(x)$ of $x$ is defined as 
\begin{equation*}\label{eq_def_initial_charge}
ch(x)=
\begin{cases}
d(x)+4, \text{~if~} x=f_0;\\
d(x)-4, \text{~otherwise}.
\end{cases}
\end{equation*}

Move charges among elements of $V\cup F$ according to the following rules (called \textit{discharging rules}):
\begin{enumerate}[$R1.$]
  \setlength{\itemsep}{0pt}
  \item The unbounded face $f_0$ sends to each incident vertex charge $\frac{17}{13}$.
  \label{rule-ext-face} 
  
  \item Every non-special 3-face receives from each incident vertex charge $\frac{1}{3}$.
\label{rule-3face}  

  \item Let $[uvw]$ be a non-special 3-face. If $u$ is an internal 3-vertex and $v$ is not, then $v$ sends to $u$ charge $\frac{2}{15}$.
\label{rule_non-bad-triangle}
  
  \item Every special $5$-face sends to each adjacent 3-face charge 1.
  \label{rule-35face} 
  
  \item Every non-special 5-face sends to each incident internal 3-vertex charge $\frac{1}{4}$, and to each incident 2-vertex charge $\frac{1}{2}$.
  \label{rule-5face}
  
  \item Every $6^+$-face $f$ $(f\neq f_0)$ sends to each incident vertex charge $\frac{d(f)-4}{d(f)}$.
  \label{rule-6+face}  
  
  \item Every non-bad vertex sends to each adjacent bad vertex charge $\frac{2}{15}$.
  \label{rule_bad-triangle}  
 
  \item Let $u$ be a vertex adjacent to a string $s$ and let $f$ ($f\neq f_0$) be the face containing $s$. Then $u$ sends to each vertex of $s$ charge $\frac{5}{52}$ when $f$ is a non-special 5-face, and charge $\frac{2}{d(f)}-\frac{2}{13}$ when $6\leq d(f) \leq 12$.
  \label{rule-string}

\end{enumerate}

Let $ch^*(x)$ denote the \textit{final charge} of an element $x$ of $V\cup F$ after the discharging procedure.
By Euler's formula $|V|-|E|+|F|=2$ and the Handshaking Theorem $2|E|=\sum\limits_{v\in V}d(v)=\sum\limits_{f\in F}d(f)$, we can deduce from the definition of $ch(x)$ that 
\begin{align*}
\sum\limits_{x\in V\cup F}ch(x)
&=\sum\limits_{v\in V}(d(v)-4)+\sum\limits_{f\in F}(d(f)-4)+8\\ 
&=\sum\limits_{v\in V}d(v)-4|V|+\sum\limits_{f\in F}d(f)-4|F|+8\\
&=4(|E|-|V|-|F|)+8\\
&=0.
\end{align*}
Since we just move charges from one element to another, the sum of charges over $V\cup F$ remains the same. So, 
$$\sum\limits_{x\in V\cup F}ch^*(x)=\sum\limits_{x\in V\cup F}ch(x)=0.$$ 
However, in what follows, we will show that $\sum\limits_{x\in V\cup F}ch^*(x)>0$. This contradiction completes the proof of Theorem \ref{thm_main_extension}.

Recall that $\mathcal{H}(G)$ (simply, $\mathcal{H}$) is the union of all the special subgraphs of $G$. 
For $v\in V(G)$, denote by $\mathcal{H}(v)$ the set of special subgraphs containing $v$ and let $h(v)=|\mathcal{H}(v)|$. 
For $H\in \mathcal{H}$, the initial charge of $H$ is defined as 
\begin{equation*} \label{eq_ch_H}
ch(H)=\sum_{v\in V(H)}\frac{ch(v)}{h(v)}.
\end{equation*}
Hence,
\begin{equation}\label{eq_H_sum}
\sum_{H\in \mathcal{H}}ch(H)
	=\sum_{H\in \mathcal{H}}\sum_{v\in V(H)}\frac{ch(v)}{h(v)}
    =\sum_{v\in V(\mathcal{H})}\sum_{H\in \mathcal{H}(v)}\frac{ch(v)}{h(v)}
    =\sum_{v\in V(\mathcal{H})}\frac{ch(v)}{h(v)}(\sum_{H\in \mathcal{H}(v)}1)
	=\sum_{v\in V(\mathcal{H})}ch(v).
\end{equation}
Similarly, we define the final charge of $H$ as 
\begin{equation*}
	ch^*(H)=\sum_{v\in V(H)}\frac{ch^*(v)}{h(v)},
\end{equation*} which similarly yields that 
$$\sum_{H\in \mathcal{H}}ch^*(H)=\sum_{v\in V(\mathcal{H})}ch^*(v).$$

For $x,y\in V\cup F$, denote by $ch(x\rightarrow y)$ the charge $x$ sends to $y$ by the discharging rules. 
\begin{claim}\label{claim_total_string}  
If $u$ is a vertex adjacent to a string $s$ of a face $f$ $(f\neq f_0)$ and $d(f)=k\geq 5$,  
then 
$$\sum_{x\in V(s)}ch(u\rightarrow x) \leq \frac{7}{26}$$ and 
\begin{equation} \label{eq_Ext_face_charge}
	ch(f\rightarrow u)-\sum_{x\in V(s)}ch(u\rightarrow x)\geq g(k)=
	\begin{cases}
		\frac{k-4}{13}, &\text{~when $k\in \{6,8,10,12\};$}\\
		\frac{k-4}{13}-(\frac{1}{k}-\frac{1}{13}), &\text{~when $k=11$};\\
		\frac{k-4}{k},  &\text{~when $k\geq 13$}.
	\end{cases}
\end{equation}
Clearly, $g(k)$ is a monotone increasing function on $k$.
\end{claim}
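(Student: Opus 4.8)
The plan is to notice that all of the action comes from a single rule. The only rule by which $u$ can send charge to a vertex of the string $s$ is $R8$, and $R8$ fires only when $f$ is a non-special $5$-face (sending $\frac{5}{52}$ to each string vertex) or when $6 \le k \le 12$ (sending $\frac{2}{k} - \frac{2}{13}$ to each string vertex); it is silent on special $5$-faces and on faces with $k \ge 13$. So, writing $\ell := |V(s)|$, I would first record that $\sum_{x \in V(s)} ch(u \to x)$ equals $\ell \cdot \frac{5}{52}$, or $\ell(\frac{2}{k} - \frac{2}{13})$, or $0$, according to these three regimes. I would also note that each endpoint of $s$ is a $2$-vertex, so its two incident edges both separate $f$ from the same second face; consequently its non-string neighbor $u$ lies on $\partial f$, so $u$ is incident to $f$ and $R6$ gives $ch(f \to u) = \frac{k-4}{k}$ whenever $k \ge 6$. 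Since $G$ is $2$-connected (Lemma \ref{lem_2connected}), $\partial f$ is a simple cycle, so $u$ cannot meet $s$ at both ends; hence there is no double counting and the displayed sums are exact.

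The second ingredient is the length bound from Lemma \ref{lem_string}: as $f \ne f_0$ and $3 \le k \le 12$, the boundary of $f$ has no $\lfloor \frac{k-1}{2} \rfloor$-string, so $\ell \le \ell_{\max}(k) := \lfloor \frac{k-1}{2}\rfloor - 1$. This yields $\ell_{\max} = 1$ for $k \in \{5,6\}$, $\ell_{\max} = 2$ for $k = 8$, $\ell_{\max} = 3$ for $k = 10$, and $\ell_{\max} = 4$ for $k \in \{11,12\}$, with $k = 7,9$ excluded because $G \in \mathcal{G}$. For $k \ge 13$ the lemma says nothing, but nothing is needed there since the sum is $0$.

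With these, both statements become finite verifications. For the first inequality, $\frac{2}{k} - \frac{2}{13} > 0$ for $k < 13$ makes the sum increasing in $\ell$, so I would tabulate $\ell_{\max}(k)$ times the per-vertex charge for $k \in \{5,6,8,10,11,12\}$ and observe that the largest entry is $2 \cdot \frac{5}{52} = \frac{5}{26} < \frac{7}{26}$ (at $k=8$); together with the $0$ contribution for $k \ge 13$ this gives the bound. For the second inequality I would plug $ch(f \to u) = \frac{k-4}{k}$ and the extremal $\ell = \ell_{\max}(k)$ into $ch(f \to u) - \sum \ge \frac{k-4}{k} - \ell_{\max}(k)\big(\frac{2}{k} - \frac{2}{13}\big)$. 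The key algebraic point is that this lower bound collapses exactly to the piecewise formula for $g$: to $\frac{k-4}{13}$ for even $k$ (using $\ell_{\max} = \frac{k-4}{2}$), to $\frac{k-4}{13} - (\frac{1}{k} - \frac{1}{13})$ for $k = 11$ (using $\ell_{\max} = \frac{k-3}{2}$), and to $\frac{k-4}{k}$ for $k \ge 13$. In other words $g(k)$ is by construction the value of $ch(f\to u) - \sum$ at maximum string length, so the inequality is actually an equality in the extremal case. Monotonicity of $g$ is then a one-line comparison of the values $\frac{22}{143}, \frac{44}{143}, \frac{66}{143}, \frac{75}{143}, \frac{88}{143}, \frac{99}{143}, \dots$.

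The arithmetic is routine; I expect the only real care to be in the degenerate regimes. Since $g$ is undefined at $k = 5$, only the first inequality is asserted there, and it holds for both special and non-special $5$-faces (the former via the silent $R8$). The two spots where one must argue rather than compute are establishing that $u$ is genuinely incident to $f$ — so that $R6$ legitimately supplies $\frac{k-4}{k}$ — and ruling out $u$ meeting $s$ at both ends; both follow from $2$-connectedness. Everything else is substitution into $R6$, $R8$, and Lemma \ref{lem_string}.
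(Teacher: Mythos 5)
Your proposal is correct and follows essentially the same route as the paper: bound the string length by $\lfloor\frac{k-1}{2}\rfloor-1$ via Lemma \ref{lem_string}, read off the per-vertex charge from $R8$ and $ch(f\rightarrow u)=\frac{k-4}{k}$ from $R6$, and verify both inequalities by direct computation. The only (harmless) differences are that you tabulate the exact integer maxima (getting the sharper bound $\frac{5}{26}$ at $k=8$, whereas the paper relaxes $\lfloor\frac{k-1}{2}\rfloor-1$ to $\frac{k-3}{2}$ to reach $\frac{7}{26}$ at $k=6$) and that you spell out why $u$ is incident with $f$, which the paper takes for granted.
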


\begin{proof}
  Lemma \ref{lem_string} implies that $s$ has at most $\lfloor \frac{k-1}{2}\rfloor-1$ vertices. By $R\ref{rule-string}$,
  if $k=5$ then $\sum_{x\in V(s)}ch(u\rightarrow x)\leq \frac{5}{52} \times 1\leq \frac{7}{26};$
  if $k \geq 13$ then $\sum_{x\in V(s)}ch(u\rightarrow x)=0 \leq \frac{7}{26};$ and 
  if $6\leq k \leq 12$ then
  \begin{equation*}
  	\sum_{x\in V(s)}ch(u\rightarrow x)\leq (\frac{2}{k}-\frac{2}{13}) \times (\lfloor \frac{k-1}{2}\rfloor-1)\leq (\frac{2}{k}-\frac{2}{13}) \times \frac{k-3}{2}\leq \frac{7}{26},
  \end{equation*}
where the equality case of the last inequality holds for $k=6$.

Since $ch(f\rightarrow u)=\frac{k-4}{k}$ for $k\geq 6$ by $R\ref{rule-6+face}$ and $\sum_{x\in V(s)}ch(u\rightarrow x)\leq (\frac{2}{k}-\frac{2}{13}) \times (\lfloor \frac{k-1}{2}\rfloor-1)$ for $6\leq k\leq 12$, we can derive Formula \ref{eq_Ext_face_charge} by a direct computation.
\end{proof}

\begin{claim}\label{lem_charge}
	For each  $4^+$-vertex $u$ of $G$, we have 
	\begin{equation*}
		ch^*(u)
		\begin{cases}
			\geq \frac{1}{2}h(u), &\text{if $u$ is internal};\\
		    > \frac{11}{15}h(u), &\text{otherwise.}
		\end{cases}
	\end{equation*}
\end{claim}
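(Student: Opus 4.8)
The plan is to bound the final charge of a fixed $4^+$-vertex $u$ of degree $d$ by tracking its inflows and outflows. Since $ch(u)=d-4\ge 0$, and the only rules that add charge to $u$ are $R\ref{rule-ext-face}$ (giving $\frac{17}{13}$ precisely when $u$ is external) and $R\ref{rule-6+face}$ (giving $\frac{d(f)-4}{d(f)}$ from each incident $6^+$-face $f\neq f_0$), while the only rules that remove charge from $u$ are $R\ref{rule-3face}$ ($\frac13$ to each incident non-special $3$-face), $R\ref{rule_non-bad-triangle}$ and $R\ref{rule_bad-triangle}$ ($\frac{2}{15}$ to certain $3$-vertex or bad neighbours), and $R\ref{rule-string}$ (to strings), we get $ch^*(u)=(d-4)+[\tfrac{17}{13}]_{\mathrm{ext}}+\sum_{f}ch(f\to u)-\mathrm{(out)}$. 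A first routine step is to check that $R\ref{rule_non-bad-triangle}$ and $R\ref{rule_bad-triangle}$ never both fire on the same neighbour $y$ of $u$: a $3$-vertex $y$ lying on a shared non-special $3$-face $[uy\cdot]$ cannot also lie on a bad $3$-face, since that would force two $3$-faces to share an edge. Hence each neighbour drains at most $\frac{2}{15}$ from $u$.

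The crucial structural input, which I would establish first, is that $3$-faces force their neighbouring faces at $u$ to be large. Let $T=[uab]$ be a $3$-face incident to $u$ and let $g\neq T$ be the face sharing the edge $ua$ with $T$. If $g$ is a $5$-face then $T\cup g$ is by definition a special subgraph, and since special subgraphs are edge-disjoint (so $T$ lies in only one of them) this forces $ua$ to be the shared edge of the special subgraph containing $T$; otherwise $g$ is a $6^+$-face, and as $T\cup g$ is bounded by a cycle of length $d(g)+1$, membership in $\mathcal{G}$ excludes $d(g)\in\{6,8\}$ (these would create $7$- or $9$-cycles), so $g$ is a $10^+$-face. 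Thus, apart from at most one edge at $u$ that is the shared edge of a special subgraph, every edge of $u$ lying on an incident $3$-face carries a $10^+$-face on its far side; in particular each non-special incident $3$-face is flanked at $u$ by two $10^+$-faces. I would combine this with two further facts: no two $3$-faces are consecutive around $u$ (else a $4$-cycle), and the special subgraphs through $u$ are pairwise edge-disjoint and meet only at $u$, which together bound both the number of incident $3$-faces and $h(u)$ by about $d/2$.

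With these facts I would run an amortisation around $u$: each incident $3$-face claims one half of the $R\ref{rule-6+face}$-charge of each of its flanking $10^+$-faces. A $10^+$-face $f$ supplies, net of the strings it makes $u$ pay, at least $g(10)=\frac{6}{13}$ by Claim \ref{claim_total_string}; moreover in the tight case, where the adjacent $3$-face triggers the full cost $\frac13+\frac{2}{15}+\frac{2}{15}=\frac35$ because its two non-$u$ vertices are internal $3$-vertices, those vertices are the neighbours of $u$ on $f$ and, being $3$-vertices, are not string endpoints, so $u$ in fact keeps the full $\frac{d(f)-4}{d(f)}\ge\frac35$ from $f$; two such halves then exactly meet the $\frac35$ cost. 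The special subgraphs are accounted for separately by the position of $u$: if $u$ is the apex, its special $3$-face is free and its two $10^+$-flanks already yield ample charge (at least $\frac65$ when dedicated); if $u$ lies on the shared edge, the special $3$-face is again free and the $10^+$-face across $u$'s other triangle-edge, together with the degree term, supplies the required $\frac12$; and if $u$ lies on the $5$-face away from the shared edge, the $5$-face gives $u$ nothing, so the $\frac12$ must come from the degree surplus $d-4$ and $u$'s remaining incident faces (and, externally, the bonus $\frac{17}{13}$).

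Concretely I would organise the argument as a case analysis on the cyclic pattern of incident face-types around $u$, run in parallel for internal and external $u$ with the $\frac{17}{13}$ from $f_0$ folded in, and verify the target $\frac12 h(u)$ (resp. $>\frac{11}{15}h(u)$) in each pattern. I expect the main obstacle to be the genuinely tight bookkeeping at faces where $R\ref{rule_non-bad-triangle}$/$R\ref{rule_bad-triangle}$ payments, $R\ref{rule-string}$ payments, and the $R\ref{rule-6+face}$ gains all meet: one must confirm that whenever $u$ pays the full $\frac35$ on a $3$-face the relevant flanking faces carry no string endpoints adjacent to $u$ (so that Claim \ref{claim_total_string} can be sharpened to the full $\frac{d(f)-4}{d(f)}$), and, when a $10^+$-flank is shared by two $3$-faces, that the resulting halving is compensated; for special subgraphs with $u$ off the apex, the delicate point is that the $5$-face-vertex subcase should not simultaneously cost $u$ its $5$-face charge and its degree surplus, where I would lean on Lemma \ref{lem_5face} (no light $5$-faces) and the forbidden-cycle constraints around the $5$-face. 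The secondary difficulty is simply making the $10^+$-flanking statement airtight for every position of $u$ (apex, shared-edge, $5$-face vertex), which rests on the edge-disjointness of special subgraphs together with Lemmas \ref{bad cycle} and \ref{lem_separating-cycle}.
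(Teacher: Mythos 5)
Your raw materials are the right ones --- that a non-special $3$-face at $u$ is flanked by $10^+$-faces, that the triangles and special subgraphs at $u$ use pairwise disjoint edge-pairs so their number is at most $d(u)/2$, and that the net gain from a large face survives the string payments via Claim~\ref{claim_total_string} --- but the proposal stops exactly where the work begins. The case analysis "on the cyclic pattern of incident face-types" is never carried out, and the configurations you yourself flag as obstacles are precisely those where the bound is tight. The sharpest instance: $u$ internal with $d(u)=4$ sitting on the special $5$-face away from the shared edge (position $v_3$, $v_4$ or $v_5$). There $ch(u)=0$, the special $5$-face pays a $4^+$-vertex nothing under $R\ref{rule-5face}$, and your plan says the needed $\tfrac12 h(u)=\tfrac12$ "must come from the degree surplus and the remaining incident faces" --- but the surplus is zero, and you never show that a remaining face is an $8^+$-face. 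Ruling out, say, a $5$-face across the edge $v_4v_5$ requires chasing a $9$-cycle through the triangle of the special subgraph (the boundary of that $5$-face minus $v_4v_5$, concatenated with $v_5v_6v_1v_2v_3v_4$); "leaning on Lemma~\ref{lem_5face} and the forbidden-cycle constraints" is not that argument. Likewise the $R\ref{rule_bad-triangle}$ payments of $\tfrac2{15}$ per bad neighbour are never charged to anything in your halving scheme, and the external case --- where the $\tfrac{17}{13}$ bonus, the string costs, and the strict inequality $>\tfrac{11}{15}h(u)$ must all be reconciled --- is not attempted.

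For comparison, the paper avoids all cyclic-pattern casework by packaging the flanking facts into two counting inequalities, $r_1(u)+b(u)\le r_3(u)$ and $r_1(u)+b(u)+h(u)\le r_2(u)+r_3(u)$, where $r_1,r_2,r_3$ count the incident non-special $3$-faces, $8$-faces and $10^+$-faces and $b$ the bad neighbours; the charge balance is then a single nonnegative linear combination, $ch^*(u)-\tfrac12h(u)\ge ch(u)+\tfrac1{10}\bigl(r_3(u)-r_1(u)\bigr)+\tfrac12\bigl(r_2(u)+r_3(u)-r_1(u)-h(u)\bigr)-\tfrac2{15}b(u)\ge\tfrac7{15}b(u)\ge0$, with the external case handled the same way using additionally $2r_1(u)+2h(u)\le d(u)$, a corrected inequality involving the number $t(u)$ of string-bearing faces, and $d(u)\ge4$. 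If you want to rescue your write-up, prove those two inequalities --- which is exactly your flanking statement plus the $9$-cycle argument above for the special-subgraph positions --- and replace the amortisation by this linear combination; as it stands, the proposal is a plan with its critical verifications left open, not a proof.
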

\begin{proof}
Firstly, assume that $u$ is internal.
Denote by $r_1(u)$, $r_2(u)$, and $r_3(u)$ the number of non-special 3-faces, 8-faces, and 10$^+$-faces containing $u$, respectively. Denote by $b(u)$ the number of bad vertices adjacent to $u$. Since $G\in \mathcal{G}$, we can deduce that
\begin{equation}\label{eq_abr_relation}
	\begin{split}
		&r_1(u)+b(u)\leq r_3(u);\\ 
		&r_1(u)+b(u)+h(u)\leq r_2(u)+r_3(u). 		
	\end{split}
\end{equation} 
Notice that $u$ sends charge $\frac{1}{3}$ to each incident non-special 3-face $f$ by $R\ref{rule-3face}$,  charge $\frac{2}{15}$ to each internal 3-vertex on $f$ by $R\ref{rule_non-bad-triangle}$, and charge $\frac{2}{15}$ to each adjacent bad vertex by $R\ref{rule_bad-triangle}$.
Moreover, $u$ receives charge $\frac{1}{2}$ from each incident $8$-face and charge at least $\frac{3}{5}$ from each incident $10^+$-face by $R\ref{rule-6+face}$. It follows that
\begin{equation}\label{eq_charge}
	\begin{split}
		&ch^*(u)-\frac{1}{2}h(u)\\
		&\geq ch(u)-\frac{1}{2}h(u)-(\frac{1}{3}+\frac{2}{15}\times 2)r_1(u)-\frac{2}{15}b(u)+\frac{1}{2}r_2(u)+\frac{3}{5}r_3(u)\\
		&=ch(u)+\frac{1}{10}(r_3(u)-r_1(u)))+\frac{1}{2}(r_2(u)+r_3(u)-r_1(u)-h(u))-\frac{2}{15}b(u)\\
		&\geq \frac{1}{10}b(u)+\frac{1}{2}b(u)-\frac{2}{15}b(u)\\
		&=\frac{7}{15}b(u)\\
		&\geq 0,
	\end{split}
\end{equation}
where the second inequality uses Formula \ref{eq_abr_relation} and the assumption $ch(u)\geq 0$.

Next, assume that $u$ is external. We apply a similar argument as above. Let the counting for $r_1(u)$, $r_2(u)$, and $r_3(u)$ exclude $f_0$. Denote by $t(u)$ the number of non-special 5-face or $6^+$-face incident with $v$ and adjacent to $f_0$. Clearly, $0\leq t(u)\leq 2.$
Similarly as Formula \ref{eq_abr_relation}, we have
\begin{equation}\label{eq_abr_relation_ext}
		r_1(u)+h(u)+b(u)\leq r_2(u)+r_3(v)+1-t(u). 
\end{equation}
Moreover, since any two 3-faces or special 5-faces are edge-disjoint,
\begin{equation}\label{eq_ar_relation}
	2r_1(u)+2h(u)\leq d(u). 
\end{equation}
Note that $u$ receives charge $\frac{17}{13}$ from $f_0$ and sends total charge at most $\frac{7}{26}t(u)$ to 2-vertices by Claim \ref{claim_total_string}. Similarly as Formula \ref{eq_charge}, we have
\begin{equation}\label{eq_charge_ext}
	\begin{split}
		&ch^*(u)-\frac{11}{15}h(u)\\
		&\geq ch(u)-\frac{11}{15}h(u)-(\frac{1}{3}+\frac{2}{15}\times 2)r_1(u)-\frac{2}{15}b(u)+\frac{1}{2}(r_2(u)+r_3(v))-\frac{7}{26}t(u)+\frac{17}{13}\\
		&\geq ch(u)-\frac{11}{15}h(u)-\frac{3}{5}r_1(u)-\frac{2}{15}b(u)+\frac{1}{2}(r_1(u)+h(u)+b(u)+t(u)-1)-\frac{7}{26}t(u)+\frac{17}{13}\\
		&=d(u)-\frac{7}{30}h(u)-\frac{1}{10}r_1(u)+\frac{11}{30}b(u)+\frac{3}{13}t(u)-\frac{83}{26}\\
		&\geq d(u)-\frac{7}{30}(h(u)+r_1(u))-\frac{83}{26}\\
		&\geq d(u)-\frac{7}{30}\times \frac{d(u)}{2}-\frac{83}{26}\\
		&>0,
	\end{split}
\end{equation}
where the second, the forth, and the last inequalities use Formula \ref{eq_abr_relation_ext}, Formula \ref{eq_ar_relation}, and the fact $d(u)\geq 4$, respectively.
\end{proof}

\begin{claim} \label{claim_special-graph}
	For each $H\in \mathcal{H}$, we have 
	\begin{equation*}
		ch^*(H)
		\begin{cases}
			\geq 0, &\text{\text{if $H$ contains no external vertices};}\\
			> 0, &\text{otherwise}.
		\end{cases}
	\end{equation*}
\end{claim}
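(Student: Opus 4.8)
The plan is to evaluate $ch^*(H)$ for $H = T \cup f$ with $T = [v_1v_2v_6]$ and $f = [v_2v_3v_4v_5v_6]$. First I would record that $R\ref{rule-35face}$ transfers exactly charge $1$ from the special $5$-face to the special $3$-face, so both special faces finish at charge $0$ and are irrelevant; thus $ch^*(H) = \sum_{i=1}^{6} ch^*(v_i)/h(v_i)$, and, crucially, any charge moved \emph{between} two vertices of $H$ cancels in this sum, so only charge crossing the boundary of $H$ matters. Since distinct special subgraphs are edge-disjoint and a vertex is incident to at least two edges of each special subgraph containing it, a vertex in $k$ of them has degree at least $2k$, whence $h(v_i) \le d(v_i)/2$; combined with Claim \ref{lem_charge} this gives $ch^*(v_i)/h(v_i) \ge \tfrac12$ for every internal $4^+$-vertex and $> \tfrac{11}{15}$ for every external one. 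The entire difficulty therefore concentrates on the vertices of degree $3$ (and, in the external case, degree $2$), each of which has $h=1$.

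Next I would pin down the faces surrounding $H$. Its outer boundary is the hexagon $v_1v_2\cdots v_6$ with chord $v_2v_6$, and the faces meeting this hexagon along an edge are necessarily non-special (a special $5$-face sharing such an edge would force two special subgraphs to share that edge) and large: testing the cycles that a small such face would create together with the chord $v_2v_6$ and the triangle/pentagon paths, the absence of $4$-, $7$-, $9$-cycles forces the two corner faces at $v_2$ and $v_6$ to have size at least $10$ and the faces across the middle edges $v_3v_4$ and $v_4v_5$ to have size at least $8$. By $R\ref{rule-6+face}$ this equips each internal $3$-vertex of $H$ with a definite income, yielding lower bounds such as $ch^*(v_1)\ge \tfrac15$, $ch^*(v_2),ch^*(v_6)\ge -\tfrac25$, $ch^*(v_3),ch^*(v_5)\ge \tfrac1{10}$, and $ch^*(v_4)\ge 0$, before any outgoing charge is subtracted.

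In Case 1 (no external vertex) every vertex has degree at least $3$ by Lemma \ref{lem_min degree}, so there are no $2$-vertices and no strings, and the only charge able to leave $H$ is the $\tfrac{2}{15}$ sent by $R\ref{rule_bad-triangle}$ to a bad vertex lying outside $H$. Lemma \ref{lem_5face} forbids $f$ from being light, so at least one of $v_2,\dots,v_6$ is a $4^+$-vertex contributing at least $\tfrac12$; adding this to the degree-$3$ bounds above already gives $ch^*(H)\ge 0$. The delicate point is the tight configurations, where these bounds sum to exactly $0$ and no leakage can be tolerated: if a degree-$3$ vertex of $H$ were adjacent to an outside bad vertex, Lemma \ref{lem_good_path} (applied along the path running through that bad triangle into $H$) would force a further $4^+$-vertex among $v_2,\dots,v_6$, whose $\ge\tfrac12$ contribution dominates the $\tfrac{2}{15}$ lost. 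In Case 2 (some external vertex) each external vertex of $H$ collects $\tfrac{17}{13}$ from $f_0$ by $R\ref{rule-ext-face}$, so an external $3$-vertex already finishes positive and an external $4^+$-vertex contributes more than $\tfrac{11}{15}$; this surplus dominates the charge sent to strings (bounded by Claim \ref{claim_total_string}) and to bad vertices, giving the strict inequality $ch^*(H)>0$.

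The main obstacle is the second and third steps together: rigorously certifying the surrounding-face lower bounds in every embedding, including the degenerate ones in which these faces share vertices with $H$ or coincide (which must be excluded or tamed via Lemma \ref{lem_separating-cycle} and Lemma \ref{lem_splitting path}), and then checking, in each tight degree sequence of $v_1,\dots,v_6$, that the charge leaking to bad vertices and strings is exactly compensated. It is here that Lemma \ref{lem_good_path} carries the decisive weight, since it is the only tool that converts the presence of a nearby bad triangle into an extra high-degree vertex of $H$, and keeping the bookkeeping honest across all these cases is where the real work lies.
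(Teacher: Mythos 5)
Your skeleton matches the paper's: reduce $ch^*(H)$ to the sum of $ch^*(v_i)/h(v_i)$, dispose of $4^+$-vertices via Claim \ref{lem_charge}, show the faces surrounding $H$ are large ($10^+$ at the two corners $v_2,v_6$, $8^+$ along the pentagon), extract one non-(internal 3-)vertex from Lemma \ref{lem_5face}, and use Lemma \ref{lem_good_path} to control bad neighbors. However, two of the places you wave at as ``bookkeeping'' are exactly where the argument is tight, and one of them is genuinely missing. First, the bound $ch^*(v_3),ch^*(v_5)\geq \tfrac{1}{10}$ is not true unconditionally: the face of $v_3$ across the edge $v_2v_3$ is only guaranteed to be an $8^+$-face in general, and it is a $10^+$-face (the ``corner face'') only when $d(v_2)=3$, so that this face also borders the special triangle along $v_1v_2$. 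If $v_2$ is a $4^+$-vertex you only get $ch^*(v_3)\geq -1+\tfrac12+\tfrac12=0$. Since the tight configuration (unique $4^+$-vertex $w\in\{v_3,v_4,v_5\}$, all else internal $3$-vertices) sums to exactly $0$ and needs the $\tfrac1{10}$ for both of $\{v_3,v_5\}\setminus\{w\}$, you must state and justify this refinement conditionally on $d(v_2)=d(v_6)=3$, which happens to hold there; as written the bound is wrong and the reader cannot tell the case split survives.

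Second, and more seriously, your Case 2 does not handle a $2$-vertex of $H$, which is the hardest part of the external case. A $2$-vertex $v_j$ of $H$ lies on the special $5$-face, which by $R\ref{rule-35face}$ sends its whole surplus to the adjacent $3$-face and, being special, sends nothing to $v_j$ under $R\ref{rule-5face}$ or $R\ref{rule-string}$; so $ch^*(v_j)=-2+\tfrac{17}{13}=-\tfrac{9}{13}$, a large deficit rather than a surplus. Your statement that external vertices ``already finish positive'' is false here, and the deficit must be recovered from the other five vertices. The paper does this by first invoking Lemma \ref{lem_splitting path} to force $j\in\{3,4,5\}$ with $v_{j-1}v_jv_{j+1}\subseteq U$, and then running a case analysis on $d(v_2)$ to squeeze out extra income for $v_{j\pm1}$ (via Claim \ref{claim_total_string} and the $10^+$ corner faces); the resulting margin is tiny ($-\tfrac{9}{13}+\tfrac23+\tfrac1{15}$ in the $j=4$ case). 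Nothing in your proposal produces this compensation, so the external case is not proved.
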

\begin{proof}
	Let $C=[v_1v_2\ldots v_6]$ be the 6-cycle of $H$ with $v_2v_6\in E(H)$. 
	We will show that for each $v\in V(H)$, 
	\begin{equation}\label{eq_H_int4_ext3}
		\frac{ch^*(v)}{h(v)}
		\begin{cases}
			> \frac{11}{15}, &\text{~if $v$ is an external $4^+$-vertex}; \\
			\geq \frac{1}{2}, &\text{~if $v$ is an internal $4^+$-vertex}; \\
	     	\geq \frac{8}{13}, &\text{~if $v$ is an external 3-vertex}; 		
		\end{cases}
	\end{equation}
	and if $v$ is an internal 3-vertex then 
	\begin{equation}\label{eq_bad_subgraph_3}
		\frac{ch^*(v)}{h(v)}\geq 
	\begin{cases}
		-\frac{2}{5}, &\text{~if $v\in \{v_2,v_6\}$}; \\		
		\frac{1}{15}, &\text{~if $v\in \{v_1,v_3,v_4,v_5\}$ and $v'$ is bad}; \\
		\frac{1}{5}, &\text{~if $v=v_1$ and $v'$ is not bad}; \\
		0, &\text{~if $v\in \{v_3,v_4,v_5\}$ and $v'$ is not bad}, \\
	\end{cases}
    \end{equation}
    where  $v'$ is the outer neighbor of $v$.
	
    For $d(v)\geq 4$, Formula \ref{eq_H_int4_ext3} follows directly from Claim \ref{lem_charge}. Assume that $d(v)=3$.
    In this case, $v$ is contained in precisely one special subgraph, i.e., $h(v)=1$.
    If $v$ is external, then $v$ is incident with $f_0$, $H$, and an $8^+$-face. Hence, $ch^*(v)/h(v)\geq ch(v)+\frac{17}{13}+\frac{4}{13}=\frac{8}{13}$ by the rule $R\ref{rule-ext-face}$ and Claim \ref{claim_total_string}, as desired.
    Next, assume that $v$ is internal. Note that only the rules $R\ref{rule-6+face}$ and $R\ref{rule_bad-triangle}$ might make $v$ move charge around. 
    For $v\in \{v_2,v_6\}$, $v$ is incident with one $10^+$-face, which sends to $v$ charge at least $\frac{3}{5}$ by $R\ref{rule-6+face}$. So, $ch^*(v)/h(v)\geq d(v)-4+\frac{3}{5}=-\frac{2}{5}$, as desired.
    Next, let $v\in\{v_1,v_3,v_4,v_5\}$. If $v'$ is a bad vertex, then $v$ sends to $v'$ charge $\frac{2}{15}$ by $R\ref{rule_bad-triangle}$, and $v$ is incident with two $10^+$-faces, which send to $v$ a total charge at least $\frac{3}{5}\times 2$ by $R\ref{rule-6+face}$. Hence, $ch^*(v)/h(v)\geq d(v)-4-\frac{2}{15}+\frac{3}{5}\times 2=\frac{1}{15}$, as desired.
    Next, assume that $v'$ is not a bad vertex. It is easy to verify that $ch^*(v)/h(v)\geq d(v)-4+\frac{3}{5}\times 2=\frac{1}{5}$ when $v=v_1$, and $ch^*(v)/h(v)\geq d(v)-4+\frac{1}{2}\times 2=0$ when $u\in \{v_3,v_4,v_5\}$. This proves Formula \ref{eq_bad_subgraph_3}.

We will verify the negativeness of the final charge of $H$ by using Formulas \ref{eq_H_int4_ext3} and  \ref{eq_bad_subgraph_3}.

Firstly, assume that $H$ contains a 2-vertex, say $v_j$. Clearly, $ch^*(v_j)=-2+\frac{17}{13}=-\frac{9}{13}$. Lemma \ref{lem_splitting path} implies that $j\in \{3,4,5\}$ and the path $v_{j-1}v_jv_{j+1}$ is the common part of $H$ and $U$. For $j=4$, if $d(v_2)=3$, then $v_3$ is incident with a $10^+$-face, yielding $ch^*(v_2)/h(v_2)+ch^*(v_3)/h(v_3)\geq -\frac{2}{5}+\min\{-1+\frac{17}{13}+\frac{6}{13},\frac{11}{15}\}=\frac{1}{3}$ by Claims \ref{claim_total_string} and \ref{lem_charge}; otherwise, $v_3$ is incident with an $8^+$-face, yielding $ch^*(v_2)/h(v_2)+ch^*(v_3)/h(v_3)\geq \frac{1}{2}+\min\{-1+\frac{17}{13}+\frac{4}{13},\frac{11}{15}\} \geq \frac{1}{3}$ by Claims \ref{claim_total_string} and \ref{lem_charge}. 
Therefore, $ch^*(H)=\sum_{v\in V(H)}ch^*(v)/h(v) \geq -\frac{9}{13}+\frac{1}{3} \times 2+\frac{1}{15}> 0$.
For $j\in \{3,5\}$, it is easy to verify that $ch^*(H)\geq \frac{1}{15}-\frac{2}{5}+0+\frac{8}{13}-\frac{9}{13}+\frac{11}{15}> 0.$

Next, assume that $H$ contains no 2-vertices.
If $H$ has at least two $4^+$-vertices or external 3-vertices, then $ch^*(H)\geq \min\{\frac{1}{2},\frac{11}{15},\frac{8}{13}\}\times 2-\frac{2}{5}\times 2+0\times 2>0,$ as desired; otherwise,
Lemma \ref{lem_5face} implies that $H$ contains precisely one $4^+$-vertex $w$ with $w\neq v_1.$ Note that the remaining vertices of $H$ are all internal 3-vertices and hence, they have no bad neighbors by Lemma \ref{lem_good_path}. 
If $w$ is external, then we have $ch^*(H)> \frac{11}{15}+\frac{1}{5}-\frac{2}{5}\times 2+0\times 2>0.$ Next, let $w$ be internal.
If $w\in \{v_2,v_6\}$, then $ch^*(H)\geq \frac{1}{2}-\frac{2}{5}+\frac{1}{5}+0\times 3\geq 0$; otherwise, each vertex of $\{v_3, v_5\} \setminus \{w\}$ is incident with a $10^+$-face and another $8^+$-face, yielding its final charge no less than $-1+\frac{3}{5}+\frac{1}{2}=\frac{1}{10}$. 
Hence,  $ch^*(H)\geq\frac{1}{2}-\frac{2}{5}\times 2+ \frac{1}{5}+\frac{1}{10}+0=0$.
\end{proof}

\begin{claim}\label{claim_int_vertex}
 For each $3^+$-vertex $v\in V(G)\setminus V(\mathcal{H})$, we have
 	\begin{equation*}
 	ch^*(v)
 	\begin{cases}
 		\geq 0, &\text{if $v$ is internal;}\\
 		> 0, &\text{otherwise}.
 	\end{cases}
 \end{equation*}
\end{claim}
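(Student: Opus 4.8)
The plan is to dispatch the high-degree vertices first and then grind the $3$-vertices by their local face pattern. If $d(v)\ge 4$, the claim is immediate from Claim~\ref{lem_charge}: since $v\notin V(\mathcal H)$ we have $h(v)=0$, so its bounds read $ch^*(v)\ge\frac12 h(v)=0$ when $v$ is internal and $ch^*(v)>\frac{11}{15}h(v)=0$ when $v$ is external, which is exactly the assertion. Thus I would spend all the effort on $d(v)=3$. First I would record the local structure forced by $G\in\mathcal G$: a $3$-vertex lies on at most one $3$-face (two would create a $4$-cycle), and since $v\notin V(\mathcal H)$ that face is non-special, so the two faces flanking it at $v$ are $10^+$-faces or $f_0$ (a face of size $q$ meeting a $3$-face along an edge produces a $(q+1)$-cycle, ruling out $q\in\{3,4,5,6,8\}$). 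More generally the three faces around a $3$-vertex are pairwise edge-adjacent, so incident faces of sizes $p,q$ satisfy $p+q-2\notin\{4,7,9\}$; this forbids a $5$-face beside a $6$-face and forbids three $5$-faces (a $9$-cycle), so a $3$-face-free $3$-vertex minimises its received charge exactly at the patterns $\{5,5,8\}$ and $\{6,6,6\}$, each returning precisely $1$, every other pattern returning more.

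For internal $v$ the decisive observation is that, by Lemma~\ref{lem_min degree}, internal vertices have degree $\ge 3$; hence a $2$-vertex is external, and a vertex adjacent to a $2$-vertex lies on $f_0$. Consequently an internal vertex has no $2$-vertex neighbour and never pays out under $R\ref{rule-string}$. With $R\ref{rule-string}$ inert, a $3$-face-free internal $3$-vertex already satisfies $ch^*(v)\ge 0$, with equality only at $\{5,5,8\}$ and $\{6,6,6\}$, as long as it makes no $R\ref{rule_bad-triangle}$ payment; and a bad neighbour would force $v$ to be that bad vertex's outer neighbour ($v$ is not itself bad, lacking an incident $3$-face), placing two $10^+$-faces at $v$, which is impossible in $\{5,5,8\}$ or $\{6,6,6\}$. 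If instead $v$ lies on a non-special $3$-face $[vab]$, its other two incident faces are $10^+$. When $[vab]$ is light it is bad (Lemma~\ref{lem_n3face} forbids negative light $3$-faces), $v$ is a bad vertex, and its non-bad outer neighbour (Lemma~\ref{lem_good_path}) returns the $\frac{2}{15}$ of $R\ref{rule_bad-triangle}$, closing the arithmetic $-1-\frac13+2\cdot\frac35+\frac{2}{15}=0$. When $[vab]$ is non-light, at least one of $a,b$ pays the $\frac{2}{15}$ of $R\ref{rule_non-bad-triangle}$, and Lemma~\ref{lem_good_path}, applied to a path $\cdots v'vab$, shows that a single payer forces $v'$ non-bad (so $v$ owes nothing), while two payers cover a single $R\ref{rule_bad-triangle}$ outlay; either way $ch^*(v)\ge 0$.

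External $3$-vertices carry the cushion $-1+\frac{17}{13}=\frac{4}{13}$ from $R\ref{rule-ext-face}$, and here I would be careful that $R\ref{rule-5face}$ pays only internal $3$-vertices, so $v$ receives nothing from an incident non-special $5$-face and may additionally pay $R\ref{rule-string}$ to strings on $f_0$ (charged through the bounding internal face). Invoking Claim~\ref{claim_total_string}, the net received from each incident $6^+$-face is at least $g(6)=\frac{2}{13}$, while a $5$-face costs at most $\frac{5}{52}$ because, by Lemma~\ref{lem_string}, a $5$-face carries at most a $1$-string. The worst internal pattern $\{5,5\}$ then yields $\frac{4}{13}-\frac{5}{26}=\frac{3}{26}>0$; the $3$-face pattern $\{3,10^+\}$ costs $\frac13$ under $R\ref{rule-3face}$ yet recovers at least $\frac{6}{13}$ from the $10^+$-face for a positive total; and all remaining patterns are strictly positive. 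Tight external vertices again have no bad neighbour by the two-$10^+$-face argument, so no $R\ref{rule_bad-triangle}$ payment intrudes.

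The hard part will be the tightness: the bad-vertex case and the $\{5,5,8\}$ and $\{6,6,6\}$ cases all come out to exactly $0$, so the claim hinges entirely on showing that these vertices leak no further charge. The crux is the pair of non-leak facts—internal vertices have no $2$-vertex neighbour, so $R\ref{rule-string}$ cannot fire on them, and a tight-pattern vertex cannot be the outer neighbour of a bad vertex, so $R\ref{rule_bad-triangle}$ cannot fire—reinforced by the delicate $R\ref{rule_non-bad-triangle}$-against-$R\ref{rule_bad-triangle}$ bookkeeping on non-light $3$-faces extracted from Lemma~\ref{lem_good_path}. The external estimates are looser, shielded by the $\frac{17}{13}$ cushion, but still need the observation that $R\ref{rule-5face}$ does not fire for external vertices and that string payouts are bounded uniformly through Claim~\ref{claim_total_string}.
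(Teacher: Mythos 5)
Your proposal is correct and follows essentially the same route as the paper: dispatch $4^+$-vertices via Claim~\ref{lem_charge}, use the forbidden cycle lengths to pin down the tight face patterns $\{5,5,8\}$ and $\{6,6,6\}$ and the fact that a $3$-face or a bad neighbour forces $10^+$-faces, invoke Lemmas~\ref{lem_n3face} and~\ref{lem_good_path} for the $R\ref{rule_non-bad-triangle}$/$R\ref{rule_bad-triangle}$ bookkeeping on triangles, and use the $\frac{17}{13}$ cushion with Claim~\ref{claim_total_string} for external vertices. A couple of routine verifications are left implicit (the explicit surplus arithmetic for non-tight internal patterns carrying bad neighbours, and the possible $\frac{2}{15}$ payment under $R\ref{rule_non-bad-triangle}$ from an external $3$-vertex to the internal $3$-vertex of its incident triangle), but neither affects positivity, so the argument matches the paper's.
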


\begin{proof}
We distinguish the following three cases.

Case 1: Let $v$ be an internal 3-vertex. Denote by $f_1$, $f_2$ and $f_3$ the faces containing $v$ with $d(f_1)\leq d(f_2)\leq d(f_3)$.

Firstly, assume that $d(f_1)=3$. It follows that $d(f_2)\geq 10$, since $G\in \mathcal{G}$ and $v\notin V(\mathcal{H})$. Hence, $v$ receives from each of $f_2$ and $f_3$ charge at least $\frac{3}{5}$ by $R\ref{rule-6+face}$ and sends to $f_1$ charge $\frac{1}{3}$ by $R\ref{rule-3face}$. Denote by $v'$ the outer neighbor of $v$. If $v$ is bad, then Lemma \ref{lem_good_path} implies that $v'$ is not bad and hence,  $v$ receives from $v'$ charge $\frac{2}{15}$ by $R\ref{rule_bad-triangle}$, giving  $ch^*(v)\geq d(v)-4+\frac{3}{5}\times 2-\frac{1}{3}+\frac{2}{15}=0$. Next, let $v$ be not bad. Since $v\notin V(\mathcal{H})$, $f_1$ is non-special. Hence, we can conclude from Lemma \ref{lem_n3face} that $f_1$ is not light, i.e., $f_1$ contains a vertex which is not an internal 3-vertex. 
By $R\ref{rule_non-bad-triangle}$, this vertex sends charge $\frac{2}{15}$ to $v$.
If $v'$ is not bad, then $v$ sends no charge to $v'$, giving $ch^*(v)\geq d(v)-4+\frac{3}{5}\times 2-\frac{1}{3}+\frac{2}{15}=0$ from above; otherwise, $v$ sends charge $\frac{2}{15}$ to $v'$ by $R\ref{rule_bad-triangle}$ and moreover, Lemma \ref{lem_good_path} implies that both the neighbors of $v$ on $f_1$ are not internal 3-vertices and together send to $v$ charge $\frac{2}{15}\times 2$ by $R\ref{rule_non-bad-triangle}$, giving $ch^*(v)\geq d(v)-4+\frac{3}{5}\times 2-\frac{1}{3}-\frac{2}{15}+\frac{2}{15}\times 2=0$. 

It remains to assume that $d(f_1)\geq 5$. Since $G\in \mathcal{G}$, we can deduce that either $d(f_1)=d(f_2)=d(f_3)=6$ or $d(f_3)\geq 8$. For the former case, we have $ch^*(v)= d(v)-4+\frac{1}{3}\times 3=0$ by $R\ref{rule-6+face}$.
For the latter case, if $v$ has no bad neighbors, then $ch^*(v)\geq d(v)-4+\frac{1}{4}\times 2+\frac{1}{2}=0$ by $R\ref{rule-5face}$ and $R\ref{rule-6+face}$. 
If $v$ has precisely one bad neighbor, then $d(f_2)\geq 10$, which gives $ch^*(v)\geq d(v)-4+\frac{1}{4}+\frac{3}{5}\times 2-\frac{2}{15}=\frac{19}{60}>0$ by  $R\ref{rule-5face}$, $R\ref{rule-6+face}$, and $R\ref{rule_bad-triangle}$.
If $v$ has more than one bad neighbor, then $d(f_1)\geq 10$, which gives $ch^*(v)\geq d(v)-4+\frac{3}{5}\times 3-\frac{2}{15}\times 3=\frac{2}{5}>0$ again by $R\ref{rule-5face}$, $R\ref{rule-6+face}$, and $R\ref{rule_bad-triangle}$.

Case 2: Let $v$ be an external 3-vertex. Denote by $f_1$ and $f_2$ the faces other than $f_0$ containing $v$ with $d(f_1)\leq d(f_2)$, and $v'$ the unique internal neighbor of $v$.
If $v'$ is bad, then $d(f_1)\geq 10$ and so, 
$ch^*(v)\geq ch(v)+\frac{17}{13}-\frac{2}{15}+\frac{6}{13}\times 2>0$ by $R\ref{rule-ext-face}, R\ref{rule_non-bad-triangle}$, and Claim \ref{claim_total_string}.
Next, let $v'$ be not bad.
If $d(f_1)=3$, then $d(f_2)\geq 10$, yielding $ch^*(v)\geq ch(v)+\frac{17}{13}-\frac{1}{3}-\frac{2}{15}+\frac{6}{13}>0$ by $R\ref{rule-ext-face}$, $R\ref{rule_bad-triangle}$, and Claim \ref{claim_total_string}.
If $d(f_1)\geq 5$, then we have $ch^*(v)\geq ch(v)+\frac{17}{13}+\min\{-\frac{5}{52},\frac{2}{13}\}\times 2> 0$ by $R\ref{rule-ext-face}, R\ref{rule-string}$, and Claim \ref{claim_total_string}.  

Case 3: Let $d(v)\geq 4$. Since $v\notin V(\mathcal{H})$, $h(v)=0$.
So, the conclusion of this claim follows directly from Claim \ref{lem_charge}. 
\end{proof}

\begin{claim}
	$ch^*(v)\geq 0$ for each 2-vertex $v\in V(G) \setminus V(\mathcal{H})$. 
\end{claim}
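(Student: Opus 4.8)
The plan is to show that every such $2$-vertex ends with nonnegative charge because it collects $\frac{17}{13}$ from $f_0$ together with at least $\frac{9}{13}$ through its other incident face. First I would note that by Lemma~\ref{lem_min degree} a $2$-vertex cannot be internal, so $v$ lies on $U=\partial f_0$; since $G$ is $2$-connected (Lemma~\ref{lem_2connected}), the degree-$2$ vertex $v$ is incident with exactly two faces, namely $f_0$ and a bounded face $f$. Lemma~\ref{lem_string} applied with $k=3$ forbids strings on bounded $3$-faces, so $v$ lies on no triangle; and since $v\notin V(\mathcal{H})$, the face $f$ is not a special $5$-face. As $G\in\mathcal{G}$ forbids $4$-, $7$- and $9$-cycles, this leaves $d(f)=5$ with $f$ non-special, $d(f)\in\{6,8,10,11,12\}$, or $d(f)\ge 13$. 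I would also record that no charge leaves $v$: it lies on no $3$-face, it is not a bad vertex, and as a $2$-vertex it is adjacent to no string, so none of $R\ref{rule-3face}$, $R\ref{rule_non-bad-triangle}$, $R\ref{rule_bad-triangle}$, $R\ref{rule-string}$ removes charge from $v$. Hence $ch^*(v)=ch(v)+(\text{charge received})$ and it suffices to bound the amount received from below.

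Next I would bundle the charge reaching $v$ through $f$. Let $s$ be the string containing $v$ and let $u_1,u_2$ be its two anchors, i.e.\ the vertices of $U$ adjacent to the ends of $s$; both lie on $f$ because the two edges at every $2$-vertex of $s$ bound $f$. Assuming $u_1\ne u_2$, the face $f$ sends charge to $v$ by $R\ref{rule-5face}$ or $R\ref{rule-6+face}$ and each anchor sends charge to $v$ by $R\ref{rule-string}$, and the crux is the exact identities $\frac12+2\cdot\frac{5}{52}=\frac{9}{13}$ when $d(f)=5$ and $\frac{d(f)-4}{d(f)}+2\bigl(\frac{2}{d(f)}-\frac{2}{13}\bigr)=\frac{9}{13}$ when $6\le d(f)\le 12$. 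When $d(f)\ge 13$ the face alone already supplies $\frac{d(f)-4}{d(f)}\ge\frac{9}{13}$ by $R\ref{rule-6+face}$, while $R\ref{rule-string}$ contributes nothing. Thus in every case the total attributable to $f$ is at least $\frac{9}{13}$, and together with the $\frac{17}{13}$ from $f_0$ given by $R\ref{rule-ext-face}$ this yields $ch^*(v)\ge -2+\frac{17}{13}+\frac{9}{13}=0$.

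The main obstacle will be justifying $u_1\ne u_2$ in the range $5\le d(f)\le 12$, since a contribution from only one anchor would break the bound. Here I would argue that if $u_1=u_2=u$, then $s$ together with $u$ closes into a cycle $C=u v_1 v_2\cdots v_t u$ of length $t+1$, where $t=|s|$; because the inner side of the whole string is one connected face, namely $f$, this face occupies a single corner at $u$, so $\partial f$ is exactly $C$ and $d(f)=t+1$. Then $f$ is a $(t+1)$-face carrying a string of length $t=(t+1)-1\ge\lfloor\frac{(t+1)-1}{2}\rfloor$, contradicting Lemma~\ref{lem_string} whenever $t+1\le 12$. Since $s$ lies on the good cycle $U$ we have $t\le|U|-1\le 12$, so $u_1=u_2$ can occur only when $d(f)=t+1=13$, a case already settled by the face-only estimate. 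Combining the cases gives $ch^*(v)\ge 0$ for every $2$-vertex $v\in V(G)\setminus V(\mathcal{H})$, as claimed.
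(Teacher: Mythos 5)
Your proof is correct and follows essentially the same route as the paper's: charge $\frac{17}{13}$ from $f_0$ plus the contributions of the other incident face $f$ and the two string-anchors via $R\ref{rule-5face}$, $R\ref{rule-6+face}$, $R\ref{rule-string}$, with the same case split on $d(f)$ and the same exact identities summing to $\frac{9}{13}$. The only difference is that you explicitly rule out the two anchors coinciding (via Lemma~\ref{lem_string}), a point the paper's computation takes for granted.
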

\begin{proof}
	By Lemma \ref{lem_min degree}, $v\in V(f_0)$.
	Let $f$ be the face containing $v$ other than $f_0$. 
	Clearly, $v$ receives charge $\frac{17}{13}$ from $f_0$ by $R$\ref{rule-ext-face}.
	If $d(f)\geq 6$, then $v$ receives charge $\frac{d(f)-4}{d(f)}$ from $f$ by $R$\ref{rule-6+face}, which gives 
	\begin{equation}\label{eq_2vertex_1}
		ch^*(v)\geq d(v)-4+\frac{17}{13}+\frac{d(f)-4}{d(f)}=4(\frac{1}{13}-\frac{1}{d(f)}).
	\end{equation}
	Hence, $ch^*(v)\geq 0$ when $d(f)\geq 13$.
	Denote by $s$ the string containing $v$. For $6\leq d(f)\leq 12$, 
    the two vertices adjacent to $s$ send to $v$ total charge $(\frac{2}{d(f)}-\frac{2}{13})\times 2$ by $R$\ref{rule-string}, which strengthens Formula \ref{eq_2vertex_1} as 
	$ch^*(v)\geq 4(\frac{1}{13}-\frac{1}{d(f)})+(\frac{2}{d(f)}-\frac{2}{13})\times 2=0.$
	Lemma \ref{lem_string} implies that $d(f)\geq 5$. Hence, it remains to assume that $d(f)=5$. Since the assumption $v\notin V(\mathcal{H})$, $f$ is non-special. So, $ch^*(v)=ch(v)+\frac{17}{13}+\frac{1}{2}+\frac{5}{52}\times 2=0$ by the rules $R\ref{rule-ext-face}$, $R\ref{rule-5face}$, and $R\ref{rule-string}$.
\end{proof}

\begin{claim}\label{claim_f0}
	$ch^*(f)\geq 0$ for each face $f$ of $G$.
\end{claim}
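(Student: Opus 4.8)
The plan is to verify $ch^*(f)\ge 0$ one face at a time, organized by the value of $d(f)$. Since $G$ is $2$-connected (Lemma \ref{lem_2connected}) and $G\in\mathcal{G}$, every face is bounded by a cycle, so it has exactly $d(f)$ distinct incident vertices, and $d(f)\in\{3,5,6,8\}\cup\{10,11,12,\ldots\}$. Note that $R\ref{rule-string}$ only transfers charge between a vertex and a string, so it never alters the charge of a face; thus only $R\ref{rule-ext-face}$, $R\ref{rule-3face}$, $R\ref{rule-35face}$, $R\ref{rule-5face}$, and $R\ref{rule-6+face}$ are relevant. I would dispose of the two easy families first. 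For $f_0$ we have $ch(f_0)=|U|+4$, and $f_0$ only loses charge through $R\ref{rule-ext-face}$, sending $\tfrac{17}{13}$ to each of its $|U|$ incident vertices; hence $ch^*(f_0)=4-\tfrac{4}{13}|U|\ge 0$ because $U$ is a good, hence $13^-$, cycle. For a $6^+$-face $f\ne f_0$, only $R\ref{rule-6+face}$ applies: it sends $\tfrac{d(f)-4}{d(f)}$ to each of its $d(f)$ incident vertices and receives nothing, so $ch^*(f)=(d(f)-4)-(d(f)-4)=0$.

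Next I would treat the $3$-faces, where $ch(f)=-1$. A special $3$-face receives charge $1$ from its incident special $5$-face by $R\ref{rule-35face}$ and sends nothing, giving $ch^*(f)=0$. A non-special $3$-face receives $\tfrac13$ from each of its three incident vertices by $R\ref{rule-3face}$, so $ch^*(f)\ge -1+1=0$ no matter what it may additionally receive.

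The substantive case is the $5$-faces, where $ch(f)=1$. Consider first a special $5$-face $f=[v_2v_3v_4v_5v_6]$ whose special $3$-face is $[v_1v_2v_6]$. Its only outgoing rule is $R\ref{rule-35face}$, so I must show that $f$ is adjacent to exactly one $3$-face, forcing $ch^*(f)=1-1=0$. This is precisely where membership in $\mathcal{G}$ is used: the outer boundary of the special subgraph is the $6$-cycle $v_1v_2v_3v_4v_5v_6$, and a $3$-face sharing an edge $v_iv_{i+1}$ of $f$ other than $v_2v_6$ (say a triangle on that edge with apex $y$) would replace $v_iv_{i+1}$ by the $2$-path $v_i\,y\,v_{i+1}$ in this $6$-cycle, creating a $7$-cycle through $v_1$, which is forbidden. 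Hence $[v_1v_2v_6]$ is the only $3$-face met by $f$.

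The non-special $5$-face $f=[v_1\ldots v_5]$ is the step I expect to be the main obstacle. Its only outgoing rule is $R\ref{rule-5face}$, so the goal is $\tfrac12 a+\tfrac14 b\le 1$, where $a$ is the number of incident $2$-vertices and $b$ the number of incident internal $3$-vertices. I would assemble three ingredients: every $2$-vertex is external with both of its neighbours external (Lemma \ref{lem_min degree}, together with the fact that a degree-$2$ external vertex has both edges on $U$); Lemma \ref{lem_string} applied with $k=5,\ t\ge 2$ forces the $2$-vertices to be pairwise non-adjacent, so $a\le 2$; and Lemma \ref{lem_5face} guarantees $f$ is not light, i.e.\ at least one incident vertex is neither a $2$-vertex nor an internal $3$-vertex. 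Then a short case check on $a$ finishes it: if $a=2$ the three remaining vertices are each a neighbour of a $2$-vertex, hence external, so $b=0$; if $a=1$ the two neighbours of the unique $2$-vertex are external non-$2$-vertices, so $b\le 2$; and if $a=0$ non-lightness yields an external or $4^+$ vertex, so $b\le 4$. In every case $\tfrac12 a+\tfrac14 b\le 1$ and $ch^*(f)\ge 0$. The only delicate bookkeeping is tracking which incident vertices are \emph{forced} to be external by the presence of the $2$-vertices; once that is pinned down, the remainder is a direct charge count.
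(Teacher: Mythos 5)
Your proposal is correct and follows essentially the same route as the paper: the same case split by $d(f)$, the same computation for $f_0$, $3$-faces and $6^+$-faces, and the same use of Lemma \ref{lem_5face} (non-lightness) and Lemma \ref{lem_string} (no $2$-strings) to bound the outgoing charge of a non-special $5$-face. The only cosmetic difference is that the paper asserts a non-special $5$-face with a $2$-vertex has exactly one such vertex, whereas you allow two non-adjacent $2$-vertices and verify that subcase directly (correctly, since then all five boundary vertices are external and $b=0$).
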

\begin{proof}
	Note that only the rule $R$\ref{rule-ext-face} makes $f_0$ move charge out. If $f=f_0$, then $ch^*(f)=ch(f_0)-\frac{17}{13}d(f_0)=d(f_0)+4-\frac{17}{13}d(f_0)=4-\frac{4}{13}d(f_0)\geq 0$, since $d(f_0)\leq 13$. Let us next assume that $f\neq f_0$.
	Since $G\in \mathcal{G}$, $d(f)\notin \{4,7,9\}$. We may distinguish the following three cases.
	
	(\rmnum{1})  Let $d(f)=3$. Since $G\in \mathcal{G}$, $f$ is adjacent to at most one 5-face. If $f$ is adjacent to a 5-face, i.e., $f$ is special, then $f$ receives from this 5-face charge $1$ by  $R$\ref{rule-35face}, giving $ch^*(f)\geq d(f)-4+1=0$; otherwise, $f$ receives from each incident vertex charge $\frac{1}{3}$ by $R$\ref{rule-3face}, giving $ch^*(f)= d(f)-4+\frac{1}{3}\times 3=0$. 
	
	(\rmnum{2}) Let $d(f)=5$.  
	Since $G\in \mathcal{G}$, $f$ is adjacent to at most one 3-face. 
	If $f$ is adjacent to a 3-face, i.e., $f$ is special, then $f$ sends charge $1$ to this 3-face by $R$\ref{rule-35face}, giving $ch^*(f)\geq d(f)-4-1=0$. Next, assume that $f$ is not special. Lemma \ref{lem_5face} implies that $f$ is not light, i.e., $f$ contains at most four internal 3-vertices. If $f$ contains no 2-vertices, then $ch^*(f)\geq d(f)-4-\frac{1}{4}\times 4=0$ by $R$\ref{rule-5face}; otherwise, Lemma \ref{lem_string} implies that $f$ contains precisely one 2-vertex and consequently at most two internal 3-vertices, yielding that $ch^*(f)\geq d(f)-4-\frac{1}{2}- \frac{1}{4}\times 2=0$ by $R$\ref{rule-5face}.

	(\rmnum{3}) Let $d(f)\in \{6,8\}$ or $d(f)\geq 10$. 
	Since $R$\ref{rule-6+face} is the only rule making $f$ move charge out, we have $ch^*(f)=d(f)-4-\frac{d(f)-4}{d(f)}\times d(f)=0.$
\end{proof}

As a counterexample to Theorem \ref{thm_main_extension}, $(G,\sigma)$ must contain an external $3^+$-vertex. So by Claims \ref{claim_special-graph}--\ref{claim_f0}, we have $\sum_{x\in V\cup F}ch^*(x)>0$, completing the proof of Theorem \ref{thm_main_extension}.

\section{Acknowledgement}
Yingli Kang is supported by NSFC (Grant No. 11901258) and ZJNSF (Grant No. LY22A010016).
Ligang Jin is supported by ZJNSF (Grant No. LY20A010014) and NSFC (Grant No. 11801522 and U20A2068).

\end{document}